\newtheorem{teor}{Theorem}
\newtheorem*{teorpre}{Theorem}
\newtheorem{cor}{Corollary}
\newtheorem{prop}{Proposition}
\newtheorem{con}{Conjecture}
\newtheorem{lem}{Lemma}
\theoremstyle{definition}
\newtheorem*{rem}{Remark}
\renewcommand{\subjclassname}{AMS \textup{2010} Mathematics Subject
Classification\ }
\author{P. Fortuny Ayuso}
\address{Departamento de Matemáticas, Universidad de Oviedo\\ Avda. Calvo Sotelo s/n, 33007 Oviedo, Spain}
\email{fortunypedro@uniovi.es}
\author{Jos\'{e} Mar\'{i}a Grau}
\address{Departamento de Matemáticas, Universidad de Oviedo\\ Avda. Calvo Sotelo s/n, 33007 Oviedo, Spain}
\email{grau@uniovi.es}
\author{Antonio M. Oller-Marc\'{e}n}
\address{Centro Universitario de la Defensa de Zaragoza\\ Ctra. Huesca s/n, 50090 Zaragoza, Spain} \email{oller@unizar.es}
\title{On the congruence $\displaystyle{\sum_{z\in\mathbb{Z}_n[\mathfrak{i}]} z^k }$}
\title{A von Staudt-type formula for 
$\displaystyle{\sum_{z\in\mathbb{Z}_n[i]} z^k }$}
\begin{document}

\begin{abstract} In this paper we study the sum of powers in the
Gaussian integers $\mathbf{G}_k(n):=\sum_{a,b \in [1,n]} (a+b
i)^k$. We give an explicit formula for $\mathbf{G}_k(n) \pmod n$
in terms of the prime numbers $p \equiv 3 \pmod 4$ with $p \mid \mid  
n$ and $p-1 \mid k$, similar to the well known one due to von Staudt
for $\sum_{i=1}^n i^k \pmod n$. We apply this formula to study the set
of integers $n$ which divide $\mathbf{G}_n(n)$ and compute its asymptotic
density with six exact digits:
$0.971000\ldots$.  
\end{abstract}

\maketitle \subjclassname{11B99, 11A99, 11A07}

\keywords{Keywords: Power sum, Erdös-Moser equation, Asymptotic
density}

\section{Introduction}

The sum of powers of integers of the form
$$S_k(n):=1^k+2^k+3^k+\cdots+n^k$$
is a well-studied problem in arithmetic
(see e.g., \cite{int1} and \cite{int2}).
Finding formulas for these sums has interested mathematicians for more than 300 years
since the time of James Bernoulli (1665-1705). If we call
$B_i$ and $B_i(x)$ the $i$-th Bernoulli number and Bernoulli
polynomial, respectively, then (see, e.g., \cite{BEA})
\begin{equation}\label{int}S_k(m)=
\frac{B_{k+1}(m+1)-B_{k+1}}{k+1}.\end{equation}

The sum of powers modulo $n$ was studied by von Staudt in 1840 in
\cite{VON}, where he gave the following result for even $k$:

\begin{teor}
  Let $k,n\geq 1$ be integers with $k$ even, then,
  $$
  S_k(n)\equiv \displaystyle{-\sum_{\substack{p \mid n \\ p-1 \mid k
      }}\frac{n}{p_i} \pmod{n}}.
  $$
\end{teor}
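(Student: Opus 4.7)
The plan is to prove the congruence via the Chinese Remainder Theorem, reducing to a statement about $S_k(p^a) \pmod{p^a}$ for each prime power $p^a$ exactly dividing $n$. Writing $n = p^a m$ with $\gcd(m, p) = 1$, I would group $\{1, \ldots, n\}$ into $m$ consecutive blocks of length $p^a$; since $(j p^a + r)^k \equiv r^k \pmod{p^a}$ by the binomial theorem, this gives
$$
S_k(n) \equiv m \cdot S_k(p^a) \pmod{p^a}.
$$
So it will suffice to establish the local formula
$$
S_k(p^a) \equiv -p^{a-1}\cdot [(p-1)\mid k] \pmod{p^a},
$$
where $[\,\cdot\,]$ denotes the Iverson bracket (equal to $1$ or $0$ depending on whether the condition holds). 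Multiplying by $m$ yields $-(n/p)\cdot[(p-1)\mid k] \pmod{p^a}$, and the CRT glues these together into the claimed global formula: contributions from primes $q \neq p$ vanish modulo $p^a$ because $p^a \mid n/q$, so they can be added freely.

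The core local identity I would prove by induction on $a$. The base case $a = 1$ is the classical power-sum-mod-$p$ lemma: $\sum_{i=1}^{p-1} i^k$ equals $-1 \pmod p$ if $(p-1) \mid k$ (by Fermat's little theorem each term reduces to $1$) and $\equiv 0 \pmod p$ otherwise (by a primitive root / geometric series argument). For the inductive step I would write each $i \in \{1, \ldots, p^a\}$ uniquely as $i = r + s p^{a-1}$ with $1 \leq r \leq p^{a-1}$ and $0 \leq s \leq p - 1$, apply the binomial theorem, and sum:
$$
S_k(p^a) \;=\; \sum_{\ell = 0}^{k} \binom{k}{\ell}\, p^{\ell(a-1)}\, S_{k-\ell}(p^{a-1})\, T_\ell(p),
$$
with $T_0(p) = p$ and $T_\ell(p) = \sum_{s=1}^{p-1} s^\ell$ for $\ell \geq 1$.

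Reducing modulo $p^a$: the terms with $\ell \geq 2$ vanish since $\ell(a-1) \geq a$ for $a \geq 2$; the $\ell = 0$ term is $p \cdot S_k(p^{a-1})$, which by the inductive hypothesis equals $-p^{a-1} \cdot [(p-1)\mid k] \pmod{p^a}$; and the $\ell = 1$ term evaluates to
$$
\frac{k(p-1)}{2} \cdot p^a \cdot S_{k-1}(p^{a-1}),
$$
which is divisible by $p^a$ — for $p$ odd because $(p-1)/2$ is an integer, and for $p = 2$ because the hypothesis that $k$ is even supplies the missing factor of $2$. This closes the induction.

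The subtle point, and the very reason for requiring $k$ even, is the behaviour of the $\ell = 1$ term when $p = 2$: without the parity hypothesis it would contribute a non-trivial $2^{a-1}$ modulo $2^a$ and the clean closed form would collapse. All other steps reduce to careful binomial bookkeeping and CRT packaging, so this is precisely where the real content of von Staudt's restriction resides.
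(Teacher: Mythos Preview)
The paper does not supply a proof of this statement: it is quoted in the introduction as von Staudt's 1840 result, with a citation to \cite{VON}, and no argument is given. So there is no ``paper's own proof'' to compare against.

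That said, your argument is correct and is essentially the standard route. The local identity you establish,
\[
S_k(p^a)\equiv -\,p^{a-1}\,[(p-1)\mid k]\pmod{p^a},
\]
is precisely the formula the paper later invokes (citing \cite{GMO}) inside the proof of Proposition~\ref{PROP:RE3}, and the block-grouping reduction $S_k(n)\equiv m\,S_k(p^a)\pmod{p^a}$ is likewise used (again via \cite{GMO}) throughout Section~3. Your inductive proof of the local identity is clean; the handling of the $\ell=1$ term for $p=2$ is exactly where the even-$k$ hypothesis is needed, as you note. One cosmetic remark: in the base case you should also record that for $p=2$ one has $S_k(2)=1+2^k\equiv 1\equiv -1\pmod 2$, matching $-2^{0}[(1\mid k)]$, since the primitive-root argument you sketch is for odd $p$.
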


L. Carlitz \cite{CAR} considered the case $k$ odd and
claimed that $n \mid S_k(n)$ in that case. P. Moree \cite{MOR2}
pointed out that this is false, but that $S_k(n) = r n/2$
for integer $r$.
The following lemma from a preprint of \cite{KEL1} gives the
parity of r:

\begin{lem}
  Let $k >2$ be odd. There is an integer $r$ such that $S_k(n) = rn/2$.
  If $n \equiv 2 \pmod 4$ then $r$ is odd, otherwise it is even.
\end{lem}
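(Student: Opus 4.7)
The plan is to exploit the classical involution $i\leftrightarrow n-i$. For odd $k$ the binomial expansion
\[
(n-i)^k+i^k=\sum_{j=0}^{k-1}\binom{k}{j}n^{k-j}(-1)^j i^j
\]
(the top term $(-1)^k i^k$ cancels $i^k$ because $k$ is odd) makes every such pair divisible by $n$. Splitting $S_k(n)=n^k+\sum_{i=1}^{n-1}i^k$ and pairing $i$ with $n-i$ in the second sum, the only unpaired summand is the fixed point $i=n/2$, which occurs precisely when $n$ is even. One obtains
\[
S_k(n)\equiv\begin{cases}0\pmod n,& n\text{ odd},\\ (n/2)^k\pmod n,& n\text{ even}.\end{cases}
\]
In either case $n\mid 2S_k(n)$, so $r:=2S_k(n)/n$ is an integer, giving the first assertion.

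The parity of $r$ depends solely on whether $S_k(n)\equiv 0$ or $S_k(n)\equiv n/2\pmod n$. When $n$ is odd the former holds and $r$ is even. When $n\equiv 0\pmod 4$, write $n=4\ell$; then $(n/2)^k=2^k\ell^k$ is divisible by $n=4\ell$ as soon as $k\geq 2$, so again $S_k(n)\equiv 0\pmod n$ and $r$ is even. When $n\equiv 2\pmod 4$, write $n=2m$ with $m$ odd; then $m^k-m=m(m^{k-1}-1)$ is divisible by $2m$ because $m$ is odd and $m^{k-1}-1$ is even, whence $S_k(n)\equiv m=n/2\pmod n$ and $r$ is odd.

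No real difficulty is anticipated; the whole argument is a two-line pairing plus a case split on $n\bmod 4$. The hypothesis $k>2$ enters only in the case $n\equiv 0\pmod 4$, where it supplies the power of $2$ needed to absorb the $4$ in $n$; indeed the parity claim would fail for $k=1$, since $S_1(n)=n(n+1)/2$ gives $r=n+1$, which is odd whenever $n$ is even. The mildly delicate step is thus simply keeping track of the $2$-adic valuation of $(n/2)^k$ against that of $n$.
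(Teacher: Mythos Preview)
Your argument is correct. The pairing $i\leftrightarrow n-i$ together with the case split on $n\bmod 4$ handles everything cleanly, and your check of the $2$-adic valuation of $(n/2)^k$ in each case is accurate. The remark on why $k>2$ is needed (and why $k=1$ fails) is a nice touch.

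Note, however, that the paper does not supply its own proof of this lemma: it is quoted from a preprint of Kellner and stated without argument. So there is no ``paper's proof'' to compare against. Your self-contained elementary proof therefore adds value, since the cited reference is an unpublished manuscript.
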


\newpage
On the other hand, in \cite{GMO} the pairs $(k,n)$ with
$k,n\geq 1$ such that $n\mid S_k(n)$ were characterized. In
particular:

\begin{teor}
\label{TEO:GMO} Let $k,n\geq 1$ be integers. Then, $n\mid S_k(n)$ if
and only if one of the following holds:
\begin{itemize}
\item[i)] $n$ is odd and $p-1\nmid k$ for every prime divisor $p$ of
$n$.
\item[ii)] $n$ is a multiple of $4$ and $k>1$ is odd.
\end{itemize}
\end{teor}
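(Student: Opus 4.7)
The plan is to split the proof into three cases by the parity of $k$ (separating $k=1$ from larger odd $k$), using von Staudt's theorem and the Kellner--Moree lemma stated above as black boxes. Both the forward and reverse implications will be handled simultaneously within each case.

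I will begin with even $k$. Von Staudt gives $S_k(n) \equiv -\sum n/p \pmod n$, where $p$ ranges over primes $p \mid n$ with $p-1 \mid k$. Thus $n \mid S_k(n)$ is equivalent to $\sum_{p \in P} 1/p \in \mathbb{Z}$, where $P$ is the set of such primes. Here I need one small sub-lemma: any nonempty sum of reciprocals of distinct primes has lowest-terms denominator divisible by every prime in the sum (proof: writing $\sum_{q \in P} 1/q = N/\prod_{q \in P} q$, the numerator reduces modulo any $p \in P$ to $\prod_{q \in P \setminus \{p\}} q \not\equiv 0 \pmod p$). This forces $P = \emptyset$. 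Since $k$ is even, $2 - 1 = 1 \mid k$, so $P = \emptyset$ requires $2 \nmid n$, i.e., $n$ odd. The surviving constraint is exactly condition (i); condition (ii) is incompatible with $k$ even.

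Next, for $k = 1$, a direct computation with $S_1(n) = n(n+1)/2$ shows $n \mid S_1(n)$ iff $n$ is odd, which coincides with (i) for $k=1$ because "$p-1 \nmid 1$ for every $p \mid n$" reduces to "$2 \nmid n$". Finally, for odd $k \geq 3$, I invoke the Kellner--Moree lemma: $S_k(n) = rn/2$ with $r$ odd iff $n \equiv 2 \pmod 4$. Hence $n \mid S_k(n)$ iff $n \not\equiv 2 \pmod 4$, i.e., $n$ odd or $4 \mid n$. In the odd branch, every prime $p \mid n$ is odd, so $p-1$ is even and cannot divide odd $k$, giving (i); the other branch is precisely (ii). Conversely, both (i) and (ii) prevent $n \equiv 2 \pmod 4$, so the Kellner--Moree divisibility applies.

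The main obstacle is not depth but precision in matching cases: ensuring the boundary $k=1$ is correctly excluded from (ii) and captured by (i), and that the parity argument for odd $k$ interacts correctly with the prime $2$ when $2 \mid n$. The only genuinely new ingredient beyond the two cited results is the one-line integrality sub-lemma on sums of reciprocals of primes used in the even-$k$ case.
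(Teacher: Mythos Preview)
The paper does not actually prove this theorem: it is quoted from \cite{GMO} as background, with no argument given here. So there is no in-paper proof to compare against.

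Evaluated on its own, your argument is correct. The three-way split on $k$ is natural given the tools the paper provides immediately before the statement: von Staudt's congruence (Theorem~1) handles even $k$, and Lemma~1 (stated for odd $k>2$) handles odd $k\ge 3$, with $k=1$ dealt with by the explicit formula $S_1(n)=n(n+1)/2$. Your integrality sub-lemma---that $\sum_{p\in P}1/p$ cannot be an integer for a nonempty finite set $P$ of distinct primes---is the only extra ingredient, and your one-line justification (reduce the numerator modulo any $p\in P$) is sound. The case boundaries are handled cleanly: you correctly observe that condition~(i) already forces $n$ odd (since $2-1=1\mid k$ would otherwise violate it), that $k=1$ lands in~(i) rather than~(ii), and that for odd $k\ge 3$ the dichotomy ``$n$ odd or $4\mid n$'' delivered by Lemma~1 matches ``(i) or (ii)'' because $p-1$ is even for every odd prime $p$.

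One cosmetic point: what you call the ``Kellner--Moree lemma'' is attributed in this paper solely to a preprint of Kellner (Lemma~1); Moree's role in the surrounding text is only to correct Carlitz's earlier claim.
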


Much research has been carried out regarding divisibility
properties of power sums (see for instance \cite{LEN,SON2,ARX,SON}).

In this work, we deal with power sums of Gaussian integers, an extension that
has not been considered yet. Instead of the sum of the $k$-th powers
of the first $n$ positive integers, we are concerned with the sum
of the $k$-th powers of all Gaussian integers in the $n \times n$ base
square of the first quadrant. Namely, this paper deals with power sums
of the form:

$$
\mathbf{G}_k(n):=
\sum_{1\leq a,b\leq
  n}(a+bi)^k.
$$

Table \ref{TAB} lists the values of $\mathbf{G}_k(n) \pmod{n}$ for
$1\leq k,n\leq 24$.

\begin{table}
\label{TAB}
\caption{$\mathbf{G}_k(n)\pmod{n}$ for $1\leq k,n\leq 24$; whit $\epsilon:=(1+i)$.}

\scriptsize
\begin{tabular}{|p{0.5cm}|p{0.01cm}|p{0.01cm}|p{0.01cm}|p{0.01cm}|p{0.03cm}|p{0.2cm}|p{0.03cm}|p{0.03cm}|p{0.03cm}|p{0.2cm}|p{0.2cm}|p{0.14cm}|p{0.14cm}|
p{0.2cm}|p{0.14cm}|p{0.14cm}|p{0.14cm}|p{0.2cm}|p{0.14cm}|p{0.14cm}|p{0.14cm}|p{0.3cm}|p{0.14cm}|p{0.14cm}|}
\hline 

$k \diagdown n$ &1 & 2 & 3 & 4 &5 & 6 & 7 & 8 & 9& 10 & 11 & 12 & 13 &
14 & 15 & 16 & 17 & 18 & 19 & 20 & 21 & 22 & 23 & 24 \\
\hline 1 &
\textbf{0} & 0 & 0 & 0 & 0 & 0 & 0 & 0 & 0 & 0 & 0 & 0 & 0 & 0 & 0 & 0
& 0 & 0 & 0 & 0 & 0 & 0 & 0 & 0 \\
\hline 2 & 0 & \textbf{0} & 0 & 0 &
0 & 0 & 0 & 0 & 0 & 0 & 0 & 0 & 0 & 0 & 0 & 0 & 0 & 0 & 0 & 0 & 0 & 0
& 0 & 0 \\
\hline 3 & 0 & $\epsilon$ & \textbf{0} & 0 & 0 &
3$\epsilon$ & 0 & 0 & 0 & 5$\epsilon$& 0 & 0 & 0 &7$\epsilon$& 0 & 0 &
0 & 9$\epsilon$ & 0 & 0 & 0 & 11$\epsilon$ & 0 & 0 \\
\hline 4 & 0 & 0
& 0 & \textbf{0} & 0 & 0 & 0 & 0 & 0 & 0 & 0 & 0 & 0 & 0 & 0 & 0 & 0 &
0 & 0 & 0 & 0 & 0 & 0 & 0 \\
\hline 5 & 0 & $\epsilon$ & 0 & 0 &
\textbf{0} & 3$\epsilon$ & 0 & 0 & 0 & 5$\epsilon$ & 0 & 0 & 0
&7$\epsilon$& 0 & 0 & 0 & 9$\epsilon$ & 0 & 0 & 0 & 11$\epsilon$ & 0 &
0 \\
\hline 6 & 0 & 0 & 0 & 0 & 0 & \textbf{0} & 0 & 0 & 0 & 0 & 0 & 0
& 0 & 0 & 0 & 0 & 0 & 0 & 0 & 0 & 0 & 0 & 0 & 0 \\
\hline 7& 0 &
$\epsilon$ & 0 & 0 & 0 & 3$\epsilon$ & \textbf{0} & 0 & 0 &
5$\epsilon$ & 0 & 0 & 0 &7$\epsilon$& 0 & 0 & 0 & 9$\epsilon$ & 0 & 0
& 0 & 11$\epsilon$ & 0 & 0 \\
\hline 8 & 0 & 0 & 2 & 0 & 0 & 2 & 0 &
\textbf{0} & 0 & 0 & 0 & 8 & 0 & 0 & 5 & 0 & 0 & 0 & 0 & 0 & 14 & 0 &
0 & 8 \\
\hline 9 & 0 & $\epsilon$ & 0 & 0 & 0 & 3$\epsilon$ & 0 & 0 &
\textbf{0} & 5$\epsilon$ & 0 & 0 & 0 &7$\epsilon$& 0 & 0 & 0 &
9$\epsilon$ & 0 & 0 & 0 & 11$\epsilon$ & 0 & 0 \\
\hline 10 & 0 & 0 &
0 & 0 & 0 & 0 & 0 & 0 & 0 & \textbf{0} & 0 & 0 & 0 & 0 & 0 & 0 & 0 & 0
& 0 & 0 & 0 & 0 & 0 & 0 \\
\hline 11 & 0 & $\epsilon$ & 0 & 0 & 0 &
3$\epsilon$ & 0 & 0 & 0 & 5$\epsilon$ & \textbf{0} & 0 & 0
&7$\epsilon$& 0 & 0 & 0 & 9$\epsilon$ & 0 & 0 & 0 & 11$\epsilon$ & 0 &
0 \\
\hline 12 & 0 & 0 & 0 & 0 & 0 & 0 & 0 & 0 & 0 & 0 & 0 &
\textbf{0} & 0 & 0 & 0 & 0 & 0 & 0 & 0 & 0 & 0 & 0 & 0 & 0 \\
\hline
13 & 0 & $\epsilon$ & 0 & 0 & 0 & 3$\epsilon$ & 0 & 0 & 0 &
5$\epsilon$ & 0 & 0 & \textbf{0} &7$\epsilon$& 0 & 0 & 0 & 9$\epsilon$
& 0 & 0 & 0 & 11$\epsilon$ & 0 & 0 \\
\hline 14 & 0 & 0 & 0 & 0 & 0 &
0 & 0 & 0 & 0 & 0 & 0 & 0 & 0 & \textbf{0} & 0 & 0 & 0 & 0 & 0 & 0 & 0
& 0 & 0 & 0 \\
\hline 15 & 0 & $\epsilon$ & 0 & 0 & 0 & 3$\epsilon$ &
0 & 0 & 0 & 5$\epsilon$ & 0 & 0 & 0 &7$\epsilon$& \textbf{0} & 0 & 0 &
9$\epsilon$ & 0 & 0 & 0 & 11$\epsilon$ & 0 & 0 \\
\hline 16 & 0 & 0 &
2 & 0 & 0 & 2 & 0 & 0 & 0 & 0 & 0 & 8 & 0 & 0 & 5 & \textbf{0} & 0 & 0
& 0 & 0 & 14 & 0 & 0 & 8 \\
\hline 17 & 0 & $\epsilon$ & 0 & 0 & 0 &
3$\epsilon$ & 0 & 0 & 0 & 5$\epsilon$ & 0 & 0 & 0 &7$\epsilon$& 0 & 0
& \textbf{0} & 9$\epsilon$ & 0 & 0 & 0 & 11$\epsilon$ & 0 & 0 \\
\hline 18 &0 & 0 & 0 & 0 & 0 & 0 & 0 & 0 & 0 & 0 & 0 & 0 & 0 & 0 & 0 &
0 & 0 & \textbf{0} & 0 & 0 & 0 & 0 & 0 & 0\\
\hline 19 & 0 &
$\epsilon$ & 0 & 0 & 0 & 3$\epsilon$ & 0 & 0 & 0 & 5$\epsilon$ & 0 & 0
& 0 &7$\epsilon$& 0 & 0 & 0 & 9$\epsilon$ & \textbf{0} & 0 & 0 &
11$\epsilon$ & 0 & 0 \\
\hline 20 & 0 & 0 & 0 & 0 & 0 & 0 & 0 & 0 & 0
& 0 & 0 & 0 & 0 & 0 & 0 & 0 & 0 & 0 & 0 & \textbf{0} & 0 & 0 & 0 & 0
\\
\hline 21 & 0 & $\epsilon$ & 0 & 0 & 0 & 3$\epsilon$ & 0 & 0 & 0 &
5$\epsilon$ & 0 & 0 & 0 &7$\epsilon$& 0 & 0 & 0 & 9$\epsilon$ & 0 & 0
& \textbf{0} & 11$\epsilon$ & 0 & 0 \\
\hline 22 & 0 & 0 & 0 & 0 & 0 &
0 & 0 & 0 & 0 & 0 & 0 & 0 & 0 & 0 & 0 & 0 & 0 & 0 & 0 & 0 & 0 &
\textbf{0} & 0 & 0 \\
\hline 23 & 0 & $\epsilon$ & 0 & 0 & 0 &
3$\epsilon$ & 0 & 0 & 0 & 5$\epsilon$ & 0 & 0 & 0 &7$\epsilon$& 0 & 0
& 0 & 9$\epsilon$ & 0 & 0 & 0 & 11$\epsilon$ & \textbf{0} & 0 \\
\hline 24 & 0 & 0 & 2 & 0 & 0 & 2 & 0 & 0 & 0 & 0 & 0 & 8 & 0 & 0 & 5
& 0 & 0 & 0 & 0 & 0 & 14 & 0 & 0 & \textbf{8} \\
\hline
\end{tabular}
\end{table}

A cursory look at Table \ref{TAB} supports the idea that when
$\textrm{Im}(\mathbf{G}_k(n))\not\equiv 0\pmod{n}$ (i.e., when
$\mathbf{G}_k(n)$ is not real modulo $n$) then
$\textrm{Re}(\mathbf{G}_k(n))\equiv \textrm{Im}(\mathbf{G}_k(n))\equiv n/2
\pmod{n}$. The large proportion of pairs $(k,n)$
for which $n \mid \mathbf{G}_k(n)$ is also remarkable.

The main goal of this paper is to give an analogue of Carlitz-von
Staudt formula in this Gaussian setting. In particular we prove
the following result:

\begin{teorpre}
  Let $k,n\geq 1$ be integers and consider the set
  $$
  \mathcal{P}(k,n):=\{ \textrm{prime $p$} : p \mid \mid n, p^2-1\mid k,
  p\equiv 3\pmod{4}\}.
  $$
Then:
$$
\mathbf{G}_k(n)\equiv
\begin{cases}  \frac{n}{2}(1+i) \pmod{n},
  & \textrm{if $k>1$ is odd and $n\equiv 2\pmod{4}$};\\
  \displaystyle{-\sum_{p\in\mathcal{P}(k,n)} \frac{n^2}{p^2}
    \pmod{n}}, & \textrm{otherwise}.
\end{cases}
$$
\end{teorpre}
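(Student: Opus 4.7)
The plan is to observe that $\{a+bi : 1 \le a,b \le n\}$ is a complete set of representatives for $\mathbb{Z}[i]/n\mathbb{Z}[i]$, so that
$$\mathbf{G}_k(n) \equiv \sum_{z\in\mathbb{Z}[i]/n\mathbb{Z}[i]} z^k \pmod{n},$$
and then to evaluate this sum prime-power by prime-power via the Chinese Remainder decomposition $\mathbb{Z}[i]/n\mathbb{Z}[i] \cong \prod_{p\mid n} \mathbb{Z}[i]/p^{e_p}\mathbb{Z}[i]$. Writing $R_e := \mathbb{Z}[i]/p^e\mathbb{Z}[i]$ and $T_e := \sum_{z \in R_e} z^k$, the image of $\mathbf{G}_k(n)$ in the $p$-component is $(n^2/p^{2e_p})\cdot T_{e_p}$, so everything reduces to computing each $T_e \bmod p^e$.

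The heart of the proof is a lifting recurrence: for every $e\ge 2$,
$$T_e \equiv p^2\, T_{e-1} \pmod{p^e}.$$
I would prove this by fibering along the surjection $R_e\twoheadrightarrow R_{e-1}$. A fiber over a class with lift $\tilde y$ is $\tilde y + p^{e-1}R_1$, and the binomial expansion $(\tilde y + p^{e-1}s)^k$ has all terms of degree $j\ge 2$ in $s$ killed modulo $p^e$ (because $j(e-1)\ge e$), leaving only $\tilde y^k + k\,p^{e-1}\tilde y^{k-1}s$. Summing on $s\in\mathbb{Z}[i]/p$ then exploits the elementary identity $\sum_{s\in\mathbb{Z}[i]/p}s\equiv 0\pmod{p}$, which one checks directly for $p$ odd and for $p=2$. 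Iterating gives $T_e \equiv p^{2(e-1)} T_1 \pmod{p^e}$, which already forces $T_e \equiv 0 \pmod{p^e}$ for every $e\ge 2$ and explains why only primes with $p\,\|\,n$ can contribute.

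Everything thus rests on $T_1=\sum_{z\in\mathbb{Z}[i]/p}z^k$. For $p\equiv 1\pmod{4}$ the ring splits as $\mathbb{F}_p\times\mathbb{F}_p$, forcing $T_1\equiv 0\pmod{p}$. For $p\equiv 3\pmod{4}$ the ring is the field $\mathbb{F}_{p^2}$, and the classical power-sum identity in a finite field gives $T_1\equiv -1\pmod{p}$ precisely when $(p^2-1)\mid k$, and $T_1\equiv 0$ otherwise. For $p=2$ a direct four-term evaluation on $\{0,1,i,1+i\}$, using $(1+i)^2=2i\equiv 0\pmod{2}$, shows $T_1\equiv 1+i\pmod{2}$ exactly when $k\ge 3$ is odd, and $T_1\equiv 0\pmod{2}$ in all other cases.

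It then remains to reassemble via CRT. In the generic branch each $p\in\mathcal{P}(k,n)$ contributes $-n^2/p^2\pmod{p}$ while every other prime-power factor of $n$ contributes $0$, and one verifies that the integer $-\sum_{p\in\mathcal{P}(k,n)} n^2/p^2$ has exactly these residues (for $q\neq p$ with $q\,\|\,n$, the summand $n^2/q^2$ is divisible by $p^{e_p}$, so the cross-terms vanish). In the special branch ($k$ odd, $n\equiv 2\pmod{4}$), only $p=2$ contributes nontrivially because for every odd prime divisor $p$ of $n$ the even number $p^2-1$ cannot divide the odd integer $k$; CRT then reassembles the residue $(1+i)\bmod 2$ together with $0\bmod(n/2)$ as $(n/2)(1+i)\bmod n$. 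The main obstacle I anticipate is the careful formulation of the recurrence step: since $T_{e-1}$ is only intrinsic modulo $p^{e-1}$, one must verify that multiplication by $p^2$ absorbs the ambiguity so that the congruence modulo $p^e$ is independent of the chosen lifts $\tilde y$.
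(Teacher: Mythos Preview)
Your proof is correct and takes a genuinely different route from the paper's. The paper expands $(a+bi)^k$ binomially, separates real and imaginary parts into double sums of classical power sums $S_m(n)$, invokes the known Carlitz--von Staudt congruences for $S_m(n)$ from \cite{GMO}, and then, in the crucial case where $p$ is odd with $p\,\|\,n$ and $p-1\mid k$, is left with the alternating binomial sum $\sum_{j}(-1)^{j(p-1)/2}\binom{k}{j(p-1)}$, which it evaluates via Hermite's lemma (Lemma~\ref{LEM:HER}) and Dilcher's alternating lacunary identity (Lemma~\ref{LEM:DIL}); the distinction between $p\equiv 1$ and $p\equiv 3\pmod 4$ emerges only through whether this sum alternates. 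By contrast, you work directly in the ring $\mathbb{Z}[i]/n\mathbb{Z}[i]$, decompose by CRT, and reduce everything to $T_1=\sum_{z\in\mathbb{Z}[i]/p}z^k$ via the lifting recurrence $T_e\equiv p^2T_{e-1}\pmod{p^e}$; the trichotomy on $p$ then falls out of the \emph{structure} of $\mathbb{Z}[i]/p$ (the field $\mathbb{F}_{p^2}$, the product $\mathbb{F}_p\times\mathbb{F}_p$, or the local ring with nilpotent $1+i$) together with the standard power-sum identity in a finite field. Your argument is shorter and more conceptual --- it bypasses Section~\ref{SEC:AUX} entirely and explains transparently why only primes with $p\,\|\,n$ can contribute --- and it would adapt readily to other quadratic (or higher) rings. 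The paper's approach, on the other hand, keeps the computation inside classical integer arithmetic and makes the parallel with von Staudt's original theorem more explicit, at the cost of the auxiliary binomial machinery.
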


As an application of this result, we study
the asymptotic density of the set of integers $n$ such that $n\mid \mathbf{G}_n(n)$,
(i.e., the density of zeros in the diagonal of Table \ref{TAB}). We prove
that this set has indeed an asymptotic density and compute its value
up to the sixth decimal digit
$0.971000\dots$.
This value is in contrast with that of the classical
integral setting \cite{GMO}, where the asymptotic density of the set
of integers $n$ such that $n\mid S_n(n)$ is exactly $1/2$.

\section{Auxiliary results on sums of binomial coefficients}\label{SEC:AUX}

In order to prove our main theorem we use some
technical results involving sums of binomial coefficients. The first
one is due to Hermite \cite{HER}, although Bachman \cite{BAC} gave it
in a more general form:

\begin{lem}\label{LEM:HER}
  Let $k$ be a positive integer and $p$ be a prime. Then:
  $$
  \sum_{0<j(p-1)<k} \binom{k}{j(p-1)}\equiv 0 \pmod{p}.
  $$
\end{lem}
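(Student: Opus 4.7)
The plan is to compute the sum $T_k := \sum_{a=1}^{p-1}(1+a)^k \pmod p$ in two different ways and compare. This reduces the identity to the classical evaluation of power sums modulo $p$.

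First, I would reindex: $T_k = \sum_{b=2}^{p} b^k$, which modulo $p$ (since $p^k\equiv 0$) equals $\sum_{b=1}^{p-1} b^k - 1$. Second, I would expand $(1+a)^k$ via the binomial theorem and swap the order of summation to obtain
\[
T_k \;\equiv\; \sum_{j=0}^{k} \binom{k}{j}\, s_j \pmod{p}, \qquad s_j := \sum_{a=1}^{p-1} a^j.
\]
The key auxiliary input, which I would quote as a standard fact, is that $s_j \equiv -1 \pmod p$ whenever $(p-1)\mid j$ (including $j=0$, where $s_0=p-1$), and $s_j \equiv 0 \pmod p$ otherwise. Substituting this in collapses the sum over $j$ to a sum over the multiples of $p-1$ lying in $[0,k]$:
\[
T_k \;\equiv\; -\binom{k}{0} - \!\!\sum_{\substack{0<j(p-1)\le k}}\!\! \binom{k}{j(p-1)} \pmod{p}.
\]

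Equating the two expressions, one has $\sum_{b=1}^{p-1}b^k - 1 \equiv -1 - \sum_{0<j(p-1)\le k}\binom{k}{j(p-1)} \pmod{p}$, i.e.\
\[
\sum_{b=1}^{p-1} b^k \;\equiv\; -\!\!\sum_{0<j(p-1)\le k}\!\!\binom{k}{j(p-1)} \pmod{p}.
\]
Now I would split into cases. If $(p-1)\nmid k$, the left side is $0$ and the last term on the right (with $j(p-1)=k$) does not occur, so the remaining sum $\sum_{0<j(p-1)<k}\binom{k}{j(p-1)}$ vanishes mod $p$. If $(p-1)\mid k$, the left side is $-1$ and on the right the term with $j(p-1)=k$ contributes $\binom{k}{k}=1$; isolating the other terms again gives $\sum_{0<j(p-1)<k}\binom{k}{j(p-1)}\equiv 0 \pmod p$, as required.

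The only substantive ingredient is the power-sum evaluation $s_j \bmod p$, which follows from the existence of a primitive root: for $(p-1)\nmid j$ one picks a generator $g$ of $(\mathbb{Z}/p)^\times$, factors $s_j \equiv (g^j-1)^{-1}(g^{j(p-1)}-1)\cdot(\text{unit})\equiv 0$. The rest is bookkeeping of the endpoints $j=0$ and $j=k$, which is the only place where care is needed; once those two boundary terms are tracked correctly, both cases of the identity fall out simultaneously.
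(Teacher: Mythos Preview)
Your argument is correct. The double count of $\sum_{a=1}^{p-1}(1+a)^k$ via the binomial expansion, combined with the classical evaluation $\sum_{a=1}^{p-1}a^{j}\equiv -[\,(p-1)\mid j\,]\pmod p$, does exactly what is needed, and your handling of the two boundary contributions $j=0$ and $j=k$ is clean and covers both cases $(p-1)\mid k$ and $(p-1)\nmid k$ simultaneously. One cosmetic remark: in your justification of $s_j\equiv 0$ you wrote ``$(g^j-1)^{-1}(g^{j(p-1)}-1)\cdot(\text{unit})$''; the trailing ``unit'' is superfluous, since the geometric-series identity already gives $s_j=\frac{g^{j(p-1)}-1}{g^{j}-1}=0$ directly.

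As for comparison with the paper: the paper does not supply a proof of this lemma at all. It is quoted as a classical result of Hermite (with a more general form attributed to Bachmann) and used as a black box in the proof of Proposition~\ref{PROP:TEC}. Your proof is in fact the standard one going back to Hermite, so you have essentially reconstructed the original argument.
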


The second technical result we use is more recent and is due to
Dilcher \cite{DIL}. It
involves alternating lacunary sums of binomial coefficients:

\begin{lem}\label{LEM:DIL}
  Let $k$ be a positive integer and let $p$ be an odd
  prime. Then
  $$
  \sum_{j=0}^{k}(-1)^{j}\binom{k(p-1)}{j(p-1)}\equiv
  \begin{cases}
    0\pmod{p}, & \textrm{if $k$ is odd};\\
    2\pmod{p}, & \textrm{if $k$ is even and $p+1\nmid k$};\\
    1 \pmod{p}, & \textrm{if $p+1\mid k$}.
  \end{cases}
  $$
\end{lem}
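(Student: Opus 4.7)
The plan is to evaluate $S := \sum_{j=0}^{k}(-1)^{j}\binom{k(p-1)}{j(p-1)}$ by a roots-of-unity filter applied to $(1+x)^{k(p-1)}$, carried out modulo $p$ in the quadratic extension $\mathbb{F}_{p^{2}}$. Letting $\omega \in \mathbb{F}_{p}^{*}$ be a primitive $(p-1)$-th root of unity and $\eta \in \mathbb{F}_{p^{2}}$ satisfy $\eta^{p-1} = -1$, the filter gives
\[
  S \;\equiv\; \frac{1}{p-1}\sum_{t=0}^{p-2}(1+\omega^{t}\eta)^{k(p-1)} \;=\; \frac{1}{p-1}\sum_{\zeta^{p-1} = -1}(1+\zeta)^{k(p-1)} \pmod{p},
\]
where the sum runs over the $p-1$ roots of $x^{p-1}+1$ in $\mathbb{F}_{p^{2}}$ (such roots exist since $2(p-1)\mid p^{2}-1$).

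The key observation is that $\zeta^{p-1}=-1$ is equivalent to $\zeta^{p}=-\zeta$, i.e.\ $\zeta$ is a nonzero trace-zero element of $\mathbb{F}_{p^{2}}/\mathbb{F}_{p}$. Fixing one such $\alpha$, the roots are $\{c\alpha : c \in \mathbb{F}_{p}^{*}\}$. The Frobenius yields $(1+c\alpha)^{p} = 1 - c\alpha$, whence
\[
  (1+c\alpha)^{p-1} \;=\; \frac{1-c\alpha}{1+c\alpha} \;=:\; u_{c},
\]
and $u_{c}^{p+1} = u_{c}\cdot u_{c}^{p} = u_{c}\cdot u_{c}^{-1} = 1$, so $u_{c} \in \mu_{p+1}$. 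A direct cross-multiplication shows $c \mapsto u_{c}$ is injective on $\mathbb{F}_{p}$; since $u_{c} = -1$ would force $2 = 0$, the map is a bijection $\mathbb{F}_{p} \to \mu_{p+1} \setminus \{-1\}$ by cardinality.

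Raising to the $k$-th power gives $(1+c\alpha)^{k(p-1)} = u_{c}^{k}$. Completing the sum over $c \in \mathbb{F}_{p}^{*}$ (and noting $u_{0} = 1$) to the full group of $(p+1)$-th roots of unity yields
\[
  \sum_{c \in \mathbb{F}_{p}^{*}} u_{c}^{k} \;=\; \Big(\sum_{\xi \in \mu_{p+1}}\xi^{k}\Big) - (-1)^{k} - 1 \;\equiv\; \mathbf{1}[(p+1)\mid k] - (-1)^{k} - 1 \pmod{p},
\]
by the orthogonality $\sum_{\xi \in \mu_{p+1}} \xi^{k} \equiv (p+1)\mathbf{1}[(p+1)\mid k] \equiv \mathbf{1}[(p+1)\mid k] \pmod{p}$. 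Dividing by $p-1 \equiv -1 \pmod p$ gives $S \equiv (-1)^{k} + 1 - \mathbf{1}[(p+1)\mid k]$, which reduces to $0,\,2,\,1$ in the three stated cases (using that $p+1$ even forces $(p+1)\mid k$ to imply $k$ even). The most delicate step is justifying the roots-of-unity filter in characteristic $p$ and verifying the a~priori $\mathbb{F}_{p^{2}}$-valued expression actually lies in $\mathbb{F}_{p}$; this follows from Galois invariance, since the Frobenius permutes the set $\{\zeta : \zeta^{p-1} = -1\}$, as $(\zeta^{p})^{p-1} = (\zeta^{p-1})^{p} = (-1)^{p} = -1$.
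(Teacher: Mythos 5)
The paper does not actually prove this lemma: it is quoted verbatim from Dilcher's article \cite{DIL} and used as a black box, so there is no internal proof to compare against. Your argument is a correct, self-contained replacement, and it is worth recording why it works: the roots-of-unity filter with modulus $p-1$ twisted by $\eta$ (where $\eta^{p-1}=-1$) correctly extracts $\sum_j(-1)^j\binom{k(p-1)}{j(p-1)}$ because $p-1\equiv-1$ is invertible mod $p$ and $x^{p-1}+1$ splits with distinct roots in $\mathbb{F}_{p^2}$ (as $2(p-1)\mid p^2-1$); the identification of those roots with the nonzero trace-zero elements $c\alpha$, the computation $(1+c\alpha)^{p-1}=(1-c\alpha)/(1+c\alpha)=u_c$ with $u_c^{p+1}=1$, and the bijection $\mathbb{F}_p\to\mu_{p+1}\setminus\{-1\}$ are all sound (injectivity needs $p$ odd, and $u_c=-1$ would force $2=0$); and the final bookkeeping $S\equiv(-1)^k+1-\mathbf{1}[(p+1)\mid k]$ reproduces the three cases, the only consistency point being that $p+1$ is even so $(p+1)\mid k$ forces $k$ even, which you note. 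In short: where the authors rely on a citation, you supply a short finite-field proof via the norm-one subgroup of $\mathbb{F}_{p^2}^{*}$; this makes the paper's Section 2 self-contained at essentially no cost, and it also makes transparent why the modulus $p+1$ enters the answer (it is the order of the group in which the $u_c$ live). The one step you flag as delicate, namely that the a priori $\mathbb{F}_{p^2}$-valued expression lies in $\mathbb{F}_p$, is actually automatic since the left-hand side $S$ is an integer, though your Galois-invariance remark is also a valid justification.
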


The following proposition will also play a key role in the proof of our
main theorem. It is a direct consequence of the lemmata above.

\begin{prop}\label{PROP:TEC}
  Let $p$ be an odd prime and $n$ a positive integer such that
  $p-1\mid n$.
  Then:
  $$
  \sum_{j=1}^{\frac{n}{(p-1)}-1}
  (-1)^{\frac{j(p-1)}{2}}\binom{n}{j(p-1)}\equiv\begin{cases}
    -1\pmod{p}, & \textrm{if $p\equiv 3\pmod{4}$ and $p+1\mid
      \frac{n}{(p-1)}$};\\ 0\pmod{p}, & otherwise.\end{cases}
  $$
\end{prop}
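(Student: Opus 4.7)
The plan is to set $m = n/(p-1)$, so that the sum becomes
$$T = \sum_{j=1}^{m-1} (-1)^{j(p-1)/2} \binom{m(p-1)}{j(p-1)},$$
and then to split the analysis according to the residue of $p$ modulo $4$, applying Lemma \ref{LEM:HER} in one branch and Lemma \ref{LEM:DIL} in the other.

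First, if $p \equiv 1 \pmod 4$, then $(p-1)/2$ is even, every sign $(-1)^{j(p-1)/2}$ equals $+1$, and $T$ collapses to $\sum_{j=1}^{m-1} \binom{n}{j(p-1)}$. Hermite's identity (Lemma \ref{LEM:HER}) gives $T \equiv 0 \pmod p$ at once, which matches the ``otherwise'' branch of the conclusion since $p \not\equiv 3 \pmod 4$.

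Second, if $p \equiv 3 \pmod 4$, then $(p-1)/2$ is odd and $(-1)^{j(p-1)/2} = (-1)^j$. Here the natural move is to complete the sum to the full range $0 \le j \le m$ so as to match the left-hand side of Dilcher's identity, and subtract off the two boundary terms:
$$T = \sum_{j=0}^{m} (-1)^j \binom{m(p-1)}{j(p-1)} \;-\; 1 \;-\; (-1)^m.$$
Applying Lemma \ref{LEM:DIL} with $k = m$ now reduces the problem to bookkeeping in three subcases according to $m$ odd, $m$ even with $p+1 \nmid m$, and $p+1 \mid m$.

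The only subtlety, and the step worth pausing on, is reconciling Dilcher's three cases with the proposition's two cases. The observation that unlocks this is that $p \equiv 3 \pmod 4$ forces $p+1$ to be even, so the hypothesis $p+1 \mid m$ already implies $m$ is even. With this in hand, the three cases of Dilcher combine with $-1-(-1)^m$ to give, respectively, $0-1+1 = 0$, $2-1-1 = 0$, and $1-1-1 = -1 \pmod p$, which is exactly the stated dichotomy. There is no serious obstacle beyond this case-matching; the proof is a short combination of the two lemmata together with a parity remark.
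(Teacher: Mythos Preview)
Your proof is correct and follows essentially the same approach as the paper: both split on $p\bmod 4$, apply Lemma~\ref{LEM:HER} directly when $p\equiv 1\pmod 4$, and in the $p\equiv 3\pmod 4$ case add and subtract the boundary terms $j=0,m$ to invoke Lemma~\ref{LEM:DIL}. Your write-up is in fact slightly more explicit than the paper's, which stops at ``the result follows by Lemma~\ref{LEM:DIL}'' without spelling out the three-case arithmetic.
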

\begin{proof}
  Write $n=k(p-1)$. then the sum in the statement is

  $$
  S=\sum_{j=1}^{k-1}(-1)^{\frac{j(p-1)}{2}}\binom{k(p-1)}{j(p-1)}.
  $$

If $p \equiv 1 \pmod 4$, then $\frac{p-1}{2}$ is even and the sum $S$
does not alternate so that Lemma \ref{LEM:HER} applies and
$S\equiv 0\pmod{p}$ in this case.

On the other hand, if $p\equiv 3\pmod{4}$ $S$ alternates then

$$
S=\sum_{j=0}^{k}(-1)^{j}\binom{k(p-1)}{j(p-1)}-\binom{k(p-1)}{0}-(-1)^k\binom{k(p-1)}{k(p-1)}
$$

and the result follows by Lemma \ref{LEM:DIL}.
\end{proof}

\section{Proof of the main theorem}

Recall that
$$
\mathbf{G}_k(n):=
\sum_{1\leq a,b\leq
  n}(a+bi)^k.
$$

Writing $z=a+bi$, the binomial theorem gives:

$$
\mathbf{G}_k(n)\equiv\sum_{1\leq a\leq n}\sum_{1\leq b\leq n}\sum_{1\leq
  m\leq k}\binom{k}{m}a^{k-m}b^mi^m \pmod{n}.
$$

Consequently, from the definition of the power sum $S_k(n)$ we
obtain the following:

\begin{lem}\label{LEM:SUM}
  Let $k,n$ be positive integers. Then:
\begin{itemize}
\item[i)] $\displaystyle{\textrm{Re}(\mathbf{G}_k(n))\equiv
\sum_{j=0}^{\lfloor k/2\rfloor}
(-1)^j\binom{k}{2j}S_{2j}(n)S_{k-2j}(n)\pmod{n}.}$
\item[ii)] $\displaystyle{\textrm{Im}(\mathbf{G}_k(n))\equiv
\sum_{j=0}^{\lfloor (k-1)/2\rfloor}
(-1)^j\binom{k}{2j+1}S_{2j+1}(n)S_{k-2j-1}(n)\pmod{n}.}$
\end{itemize}
\end{lem}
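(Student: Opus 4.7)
The plan is to start from the binomial expansion of $(a+bi)^k$ already displayed just above the lemma, interchange the order of summation so that the inner double sum factors into a product of two ordinary power sums $S_{k-m}(n)S_{m}(n)$, and then split the result into real and imaginary parts by separating the indices $m$ according to parity.

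Concretely, I would first observe that since $a,b$ range independently over $[1,n]$ the expression
\[
\sum_{1\le a\le n}\sum_{1\le b\le n}\binom{k}{m}a^{k-m}b^{m}i^{m}
\]
for fixed $m$ factors as
\[
\binom{k}{m}\,i^{m}\Bigl(\sum_{a=1}^{n}a^{k-m}\Bigr)\Bigl(\sum_{b=1}^{n}b^{m}\Bigr)=\binom{k}{m}\,i^{m}\,S_{k-m}(n)\,S_{m}(n).
\]
Summing over $m$ from $0$ to $k$ (and noting that the $m=0$ term, absent in the statement of the binomial expansion in the excerpt, differs from $S_{k}(n)\,n$ which is $\equiv 0\pmod n$ and so contributes nothing modulo $n$) gives
\[
\mathbf{G}_{k}(n)\equiv\sum_{m=0}^{k}\binom{k}{m}\,i^{m}\,S_{k-m}(n)\,S_{m}(n)\pmod{n}.
\]

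The second step is to use the periodicity $i^{2j}=(-1)^{j}$ and $i^{2j+1}=(-1)^{j}i$ to separate the sum into its real and imaginary components. The terms with $m=2j$ contribute $(-1)^{j}\binom{k}{2j}S_{2j}(n)S_{k-2j}(n)$ to the real part, with $j$ running from $0$ to $\lfloor k/2\rfloor$; the terms with $m=2j+1$ contribute $(-1)^{j}\binom{k}{2j+1}S_{2j+1}(n)S_{k-2j-1}(n)$ to the imaginary part, with $j$ running from $0$ to $\lfloor (k-1)/2\rfloor$. This yields exactly the two formulas in the statement.

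There is essentially no obstacle here: the argument is a direct reorganization of the binomial expansion and the cyclic values of $i^{m}$. The only minor point to be careful about is making sure the ranges of $j$ are correct (i.e.\ that $2j\le k$ and $2j+1\le k$ respectively), which is built into the floor functions appearing in the statement.
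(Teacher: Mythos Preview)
Your proof is correct and is exactly the argument the paper has in mind: the paper itself does not spell out a proof but simply says that the lemma follows from the displayed binomial expansion together with the definition of $S_k(n)$, which is precisely your factoring step followed by the even/odd split of $i^{m}$. The only minor wording slip is ``differs from $S_k(n)\,n$'': you presumably mean that the $m=0$ term \emph{equals} $S_k(n)S_0(n)=nS_k(n)$, which vanishes modulo $n$ (and in any case this term is actually present in the lemma's sum at $j=0$, so there is nothing to reconcile).
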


This result allows us to study $\textrm{Re}(\mathbf{G}_k(n))$ and
$\textrm{Im}(\mathbf{G}_k(n))$ separately. We start with the
imaginary part:

\begin{prop}\label{PROP:IM}
  For any integers $n,k$, $\textrm{Im}(\mathbf{G}_k(n))\equiv 0\pmod n$ unless
  $n\equiv 2\pmod{4}$ and $k>1$ is odd in which case
  $\textrm{Im}(\mathbf{G}_k(n))\equiv n/2\pmod{n}$.
\end{prop}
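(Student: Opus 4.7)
The plan is to apply Lemma \ref{LEM:SUM}(ii) and exploit the classical reductions of $S_k(n)\pmod n$: von~Staudt for even $k$, the Carlitz--Moree--Kellner description $S_k(n)=rn/2$ (with $r$ controlled by $n\bmod 4$) for odd $k>2$, and $S_1(n)=(n/2)(n+1)\equiv n/2\pmod n$ for even $n$. Since every summand in Lemma \ref{LEM:SUM}(ii) contains an odd power sum $S_{2j+1}(n)$ as a factor, the ``zero'' conclusion follows quickly in the easy cases. For $n$ odd, no prime $p\mid n$ has $p-1$ dividing an odd integer, so every $S_{2j+1}(n)\equiv 0\pmod n$ and the whole sum vanishes. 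For $4\mid n$, the same holds whenever $2j+1>1$; the only surviving contributions pair $S_1(n)\equiv n/2$ with some other $S_{e}(n)$, and a uniform observation---namely, that when $4\mid n$ one has $(n/2)\,S_{e}(n)\equiv 0\pmod n$ for every $e\geq 1$, because $4\mid n$ forces $2p\mid n$ for every prime $p\mid n$, making each $(n/2)(n/p)$ a multiple of $n$---disposes of these.

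For $n\equiv 2\pmod 4$, which is the delicate case, $n/2$ is odd, so $(n/2)^{2}\equiv n/2\pmod n$, and every odd power sum satisfies $S_{2j+1}(n)\equiv n/2\pmod n$. If $k$ is even, both factors in each product are odd power sums, each product is $\equiv (n/2)^{2}\equiv n/2\pmod n$, and the sum collapses to $(n/2)\,\mathrm{Im}((1+i)^{k})\pmod n$; but $(1+i)^{k}=(2i)^{k/2}$ has imaginary part $0$ or $\pm 2^{k/2}$, and $(n/2)$ times either is a multiple of $n$. If $k>1$ is odd, the term $j=(k-1)/2$ drops out because $S_{0}(n)=n\equiv 0\pmod n$; for each remaining $j$, $S_{k-2j-1}(n)$ is a positive even power sum, which by von~Staudt is congruent mod $n$ to $-\sum n/p$ ranging over primes $p\mid n$ with $p-1\mid k-2j-1$. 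The prime $p=2$ always contributes $-n/2$; writing $n=2m$ with $m$ odd, each odd $p\mid n$ contributes an even $-n/p$. Multiplying by $n/2$ modulo $n$, the odd-prime terms die since $(n/2)(n/p)=n(m/p)$ is a multiple of $n$, while $-(n/2)^{2}\equiv -n/2\equiv n/2\pmod n$; hence $(n/2)\,S_{k-2j-1}(n)\equiv n/2\pmod n$ for every admissible $j$.

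Combining these reductions,
$$
\mathrm{Im}(\mathbf{G}_{k}(n))\equiv(n/2)\!\!\sum_{j=0}^{(k-3)/2}(-1)^{j}\binom{k}{2j+1}\equiv(n/2)\bigl(\mathrm{Im}((1+i)^{k})-(-1)^{(k-1)/2}\bigr)\pmod n.
$$
For $k>1$ odd, $\mathrm{Im}((1+i)^{k})=\pm 2^{(k-1)/2}$ is even (since $k\geq 3$), so the bracketed integer is odd and $(n/2)\cdot(\text{odd})\equiv n/2\pmod n$, as claimed; the residual case $k=1$ is immediate because $\mathrm{Im}(\mathbf{G}_{1}(n))=S_{1}(n)\cdot n\equiv 0\pmod n$. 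The main obstacle is this $n\equiv 2\pmod 4$, $k$ odd subcase: it mixes the Carlitz--Moree--Kellner parity statement for odd $S$'s with von~Staudt's formula for even $S$'s, and then requires identifying the resulting alternating binomial sum as $\mathrm{Im}((1+i)^{k})-(-1)^{(k-1)/2}$ so that its parity can be tracked via the closed form of $(1+i)^{k}$.
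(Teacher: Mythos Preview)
Your proof is correct. In the easy cases ($n$ odd and $4\mid n$) it follows the same outline as the paper, with one small expository gap: your ``uniform observation'' that $(n/2)S_e(n)\equiv 0\pmod n$ when $4\mid n$ is justified only via the von~Staudt expression $S_e(n)\equiv -\sum n/p$, which applies to \emph{even} $e$. When $k$ is even the relevant exponent $e=k-1$ is odd, and the argument should instead invoke $S_e(n)=rn/2$ together with $(n/2)^2=n\cdot(n/4)\equiv 0\pmod n$. This is trivial to fill in, but as written your justification does not cover it.

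Where your argument genuinely diverges from the paper is the case $n\equiv 2\pmod 4$. The paper works by CRT modulo $2$ and modulo $n/2$: for $k$ even it uses the (Lucas-type) fact that $\binom{k}{2j+1}$ is always even together with $S_{2j+1}(n)\equiv 0\pmod{n/2}$; for $k>1$ odd it checks $\mathrm{Im}(\mathbf{G}_k(n))\equiv 1\pmod 2$ via $\sum_{j<(k-1)/2}\binom{k}{2j+1}=2^{k-1}-1$, and separately $\equiv 0\pmod{n/2}$. You instead recognise $\sum_j(-1)^j\binom{k}{2j+1}=\mathrm{Im}\,(1+i)^k$ and read off the required $2$-divisibility from the closed forms $(1+i)^k=(2i)^{k/2}$ and $(1+i)^k=(1+i)(2i)^{(k-1)/2}$. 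Your route is more unified---one identity handles both parities of $k$---and avoids the Lucas congruence for $\binom{k}{\text{odd}}$; the paper's route is marginally more elementary in that it never evaluates $(1+i)^k$, relying only on standard binomial identities. Both reach the conclusion with comparable effort, and your use of $(1+i)^k$ is arguably the natural device given the Gaussian setting.
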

\begin{proof} We examine different cases and use Lemma
\ref{LEM:SUM} ii) extensively.
\begin{itemize}
\item If $n$ is odd, then $p-1$ is even for every $p\mid n$ and we can
  apply part i) of Theorem \ref{TEO:GMO} to get $S_{2j+1}(n)\equiv
  0\pmod n$ for every $0\leq j\leq \lfloor k/2\rfloor+1$. Hence
  $\textrm{Im}(\mathbf{G}_k(n))\equiv 0\pmod n$ in this case.
\item If $4\mid n$ then Theorem \ref{TEO:GMO} ii) implies that
  $S_{2j+1}(n)\equiv 0\pmod n$ for every $j>1$. Consequently,
  $\textrm{Im}(\mathbf{G}_k(n))\equiv k\frac{n(n-1)}{2}S_{k-1}(n)\pmod n$
  and four cases arise:
  \begin{itemize}
  \item[i)] If $k=1$, then $\textrm{Im}(\mathbf{G}_k(n))\equiv
    n\frac{n(n-1)}{2}\equiv 0\pmod{n}$.
  \item[ii)] If $k=2$, then $\textrm{Im}(\mathbf{G}_k(n))\equiv
    2\left(\frac{n(n-1)}{2}\right)^2\equiv 0\pmod{n}$.
  \item[iii)] If $k>2$ is even, then $S_{k-1}(n)\equiv 0\pmod{n}$ due to
    Theorem \ref{TEO:GMO} ii) and hence $\textrm{Im}(\mathbf{G}_k(n))\equiv
    0\pmod{n}$.
  \item[iv)] If $k>1$ is odd, then \cite[Lemma 3]{GMO}
    $S_{k-1}(n)\equiv\frac{n}{2}S_{k-1}(2)\equiv 0\pmod{2}$ so that
    $\textrm{Im}(\mathbf{G}_k(n))\equiv 0\pmod{n}$.
\end{itemize}
\item For $n\equiv 2\pmod{4}$ we consider the following cases:
\begin{itemize}
\item[i)] If $k=1$, since $S_0(n)=n$ then
  $\textrm{Im}(\mathbf{G}_k(n))\equiv 0\pmod{n}$ trivially.
\item[ii)] If $k$ is even, then $\binom{k}{2j+1}$ is also even for every
  $j\geq0$. Moreover, we know \cite{GMO} that in this case
  $S_{2j+1}(n)\equiv 0\pmod{n/2}$ from which follows that
  $\textrm{Im}(\mathbf{G}_k(n))\equiv 0\pmod{n}$.
\item[iii)] If $k>1$ is odd, it is easy to see that
  $\displaystyle{\sum_{j=0}^{\frac{k-1}{2}}\binom{k}{2j+1}\equiv
  1\pmod{2}}$. Thus, since $S_m(2)\equiv 1\pmod{2}$ for every positive
  $m$, it follows that $\textrm{Im}(\mathbf{G}_k(n))\equiv 1\pmod{2}$.
  Just like in the previous case, $\textrm{Im}(\mathbf{G}_k(n))\equiv
  0\pmod{n/2}$ and then
  $\textrm{Im}(\mathbf{G}_k(n))\equiv n/2\pmod{n}$.
\end{itemize}
\end{itemize}
\end{proof}

We now consider the real part, which requires a finer analysis. Notice
that $S_k(1)=1\equiv 0\pmod{1}$, so that in what follows we assume
$n>1$.

\begin{prop}\label{PROP:RE1}
  If $n\equiv 2\pmod{4}$ and $k>1$ is odd, then
  $\textrm{Re}(\mathbf{G}_k(n))\equiv n/2\pmod{n}$.
\end{prop}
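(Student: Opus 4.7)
The plan is to follow the template of Proposition \ref{PROP:IM} but for the even-indexed cross-terms of Lemma \ref{LEM:SUM}. First I would apply Lemma \ref{LEM:SUM}(i) to write
$$
\textrm{Re}(\mathbf{G}_k(n)) \equiv \sum_{j=0}^{(k-1)/2} (-1)^j \binom{k}{2j} S_{2j}(n)\, S_{k-2j}(n) \pmod n,
$$
and observe that, since $k$ is odd, every exponent $k-2j$ appearing on the second factor is also odd.

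The next step is to evaluate each $S_{k-2j}(n)\pmod n$. When $k-2j\geq 3$, the Carlitz--Moree lemma stated in the introduction gives $S_{k-2j}(n)=rn/2$ with $r$ odd, so $S_{k-2j}(n)\equiv n/2 \pmod n$ (odd multiples of $n/2$ collapse to $n/2$ modulo $n$). For the extremal index $k-2j=1$, one verifies directly that $S_1(n)=n(n+1)/2 \equiv n/2 \pmod n$, since $n+1$ is odd. Substituting yields
$$
\textrm{Re}(\mathbf{G}_k(n)) \equiv \frac{n}{2}\sum_{j=0}^{(k-1)/2} (-1)^j \binom{k}{2j} S_{2j}(n) \pmod n.
$$

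The $n/2$ factor makes the whole expression depend only on the parity of the right-hand sum, because $(n/2)\cdot x \pmod n$ is determined by $x\pmod 2$. The $j=0$ term vanishes since $S_0(n)=n$ is even. For $j\geq 1$, $S_{2j}(n)$ is congruent modulo $2$ to the number of odd integers in $[1,n]$, which equals $n/2$ and is odd by hypothesis. Signs disappear modulo $2$, so the relevant quantity is
$$
\sum_{j=1}^{(k-1)/2}\binom{k}{2j}=2^{k-1}-1,
$$
which is odd for $k\geq 3$. Hence $\textrm{Re}(\mathbf{G}_k(n))\equiv n/2\pmod n$, as claimed.

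No step presents a serious obstacle: the key observation is that the hypotheses force each odd power sum $S_{k-2j}(n)$ into the coset $n/2+n\mathbb{Z}$, after which the vanishing of the $j=0$ term and the elementary identity for $\sum_{j\geq 1}\binom{k}{2j}$ close the argument. The only mild care needed is in dispatching $S_1(n)$ separately, since the Carlitz--Moree lemma as stated assumes $k>2$.
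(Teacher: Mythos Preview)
Your proof is correct and follows essentially the same route as the paper's. The only cosmetic difference is that you invoke the Carlitz--Moree lemma (Lemma~1) to get $S_{k-2j}(n)\equiv n/2\pmod n$ in one stroke, whereas the paper first records $S_{k-2j}(n)\equiv 0\pmod{n/2}$ via \cite{GMO} and then carries out a separate mod~$2$ computation; both arguments terminate in the same parity check $\sum_{j=1}^{(k-1)/2}\binom{k}{2j}=2^{k-1}-1\equiv 1\pmod 2$.
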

\begin{proof} Since $k$ is odd, $k-2j$ is odd for every $0\leq
j\leq\lfloor k/2\rfloor$. Consequently \cite{GMO} $S_{k-2j}(n)\equiv
0\pmod{n/2}$ and due to Lemma \ref{LEM:SUM} i),
$\textrm{Re}(\mathbf{G}_k(n))\equiv 0 \pmod{n/2}$.  Moreover, since
$S_0(2)\equiv 0\pmod{2}$ and $S_m(2)\equiv 1\pmod{2}$ for every $m>1$,
it follows that $\textrm{Re}(\mathbf{G}_k(n))\equiv
\dfrac{n^2}{4}\displaystyle{\sum_{j=1}^{\frac{k-1}{2}}\binom{k}{2j}}\pmod{2}$.
To conclude, it is enough to observe that
$\displaystyle{\sum_{j=1}^{\frac{k-1}{2}}\binom{k}{2j}}\equiv
1\pmod{2}$ and $n^2/4\equiv 1\pmod{2}$.
\end{proof}

\begin{prop}\label{PROP:RE2}
  If $k>1$ is odd and $n\not\equiv 2\pmod{4}$, or if
  $k=1$, then $\textrm{Re}(\mathbf{G}_k(n))\equiv 0\pmod{n}$.
\end{prop}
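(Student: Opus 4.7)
The plan is to apply Lemma \ref{LEM:SUM} i) and discard terms using the identity $S_0(n)=n$ together with the Carlitz--von Staudt-type Theorem \ref{TEO:GMO}. Since $S_0(n)\equiv 0\pmod n$, the $j=0$ summand of
$$
\sum_{j=0}^{\lfloor k/2\rfloor}(-1)^j\binom{k}{2j}S_{2j}(n)S_{k-2j}(n)
$$
automatically vanishes modulo $n$. When $k=1$ this is the only summand, and the claim is immediate; thus it suffices to treat $k>1$ odd, and I would split according to the parity of $n$.

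If $n$ is odd, every prime divisor $p$ of $n$ is odd, so $p-1$ is even and cannot divide the odd nonzero integer $k-2j$. Theorem \ref{TEO:GMO} i) then forces $S_{k-2j}(n)\equiv 0\pmod n$ for every $0\le j\le (k-1)/2$, and the entire sum vanishes modulo $n$.

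If $4\mid n$, Theorem \ref{TEO:GMO} ii) kills $S_{k-2j}(n)$ modulo $n$ for every odd exponent $k-2j>1$, i.e.\ for every $j<(k-1)/2$. Combined with the vanishing of the $j=0$ term, the only summand that could contribute is the one with $j=(k-1)/2$, namely
$$
T=(-1)^{(k-1)/2}k\,S_{k-1}(n)\,S_1(n)=(-1)^{(k-1)/2}k\,S_{k-1}(n)\,\frac{n(n+1)}{2}.
$$

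The only real obstacle is to verify $n\mid T$. Writing $T=(-1)^{(k-1)/2}k(n+1)\cdot(n/2)\cdot S_{k-1}(n)$ and cancelling the factor $n/2$, the divisibility $n\mid T$ reduces to the parity statement $2\mid k(n+1)S_{k-1}(n)$. Since $k$ and $n+1$ are both odd, this in turn reduces to showing $S_{k-1}(n)$ is even. Reducing $\sum_{i=1}^n i^{k-1}$ modulo $2$, only the odd values of $i$ contribute a $1$ (here $k-1\ge 2$), and there are exactly $n/2$ of them; the hypothesis $4\mid n$ makes $n/2$ even, so $S_{k-1}(n)\equiv 0\pmod 2$ and hence $n\mid T$, concluding the argument.
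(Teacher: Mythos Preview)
Your proof is correct and follows essentially the same route as the paper: apply Lemma~\ref{LEM:SUM}~i), dispose of the $j=0$ term via $S_0(n)=n$, and kill the remaining terms with Theorem~\ref{TEO:GMO}. In fact your treatment of the $4\mid n$ case is more careful than the paper's, which writes the surviving summand as $(-1)^{(k-1)/2}\binom{k}{k-1}S_0(n)S_1(n)$ (an apparent slip for $S_{k-1}(n)S_1(n)$); your parity argument showing $2\mid S_{k-1}(n)$ is exactly what is needed to close that gap.
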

\begin{proof}
  The case $k=1$ is trivial, since
  $\textrm{Re}(\mathbf{G}_k(n))\equiv S_0(n)S_1(n)=nS_1(n)\equiv 0\pmod{n}$.

  Now, assume that $k>1$ is odd and $n\not\equiv 2\pmod{4}$. We
  distinguish two cases:
  \begin{itemize}
    \item[i)] If $n$ is odd, then $S_{k-2j}(n)\equiv0\pmod{n}$ for every
    $0\leq j\leq\lfloor k/2\rfloor$ because $k-2j$ is odd and Theorem
    \ref{TEO:GMO} i) applies. The result follows from Lemma \ref{LEM:SUM}
    i).
    \item[ii)] If $4\mid n$, then $S_{k-2j}(n)\equiv 0\pmod{n}$
    for every $0\leq j<\lfloor k/2\rfloor=\frac{k-1}{2}$. Hence,
    $\textrm{Re}(\mathbf{G}_k(n))\equiv
    (-1)^{\frac{k-1}{2}}\binom{k}{k-1}S_0(n)S_1(n)\equiv 0\pmod{n}$
    because $S_0(n)=n$.
  \end{itemize}
\end{proof}

\begin{prop}\label{PROP:RE3}
  Let $k>1$ and
  $n=p_1^{r_1}\cdots p_s^{r_s}$ be integers. Then
  $$
  \textrm{Re}(\mathbf{G}_k(n))\equiv
  \begin{cases}
    -\frac{n^2}{p_i^2} \pmod{p_i}, &
    \textrm{if $r_i=1$, $p_i^2-1\mid k$ and $p_i \equiv 3\pmod{4}$};\\
    0 \pmod{p_i^{r_i}}, & otherwise.
  \end{cases}
  $$
\end{prop}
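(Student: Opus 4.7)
For $k>1$ odd the statement is already captured by Propositions \ref{PROP:RE1} and \ref{PROP:RE2}: when $k$ is odd, no odd prime $p_i$ can satisfy $(p_i-1)\mid k$, so the ``otherwise'' branch of the announced formula reduces to $0\pmod{p_i^{r_i}}$, matching \ref{PROP:RE2}. Hence I focus on even $k\geq 2$, and apply Lemma \ref{LEM:SUM}~(i):
$$
\textrm{Re}(\mathbf{G}_k(n)) \equiv \sum_{j=0}^{k/2} (-1)^j \binom{k}{2j} S_{2j}(n) S_{k-2j}(n) \pmod n.
$$
The plan is to analyze this sum modulo $p_i^{r_i}$ for each prime power dividing $n$. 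Writing $n=p_i^{r_i} m$ with $\gcd(m,p_i)=1$, the boundary terms $j=0$ and $j=k/2$ vanish modulo $p_i^{r_i}$ because $S_0(n)=n$.

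For interior $j$, the classical von Staudt congruence simplifies $S_{2j}(n)$ modulo $p_i^{r_i}$ to $-p_i^{r_i-1}m$ if $(p_i-1)\mid 2j$ and to $0$ otherwise (all contributions from primes $p\neq p_i$ are divisible by $p_i^{r_i}$); the same holds for $S_{k-2j}(n)$. Hence each interior product is either $0$ or $\equiv p_i^{2(r_i-1)}m^2\pmod{p_i^{r_i}}$. When $r_i\geq 2$ the factor $p_i^{2(r_i-1)}$ already vanishes modulo $p_i^{r_i}$, so $\textrm{Re}(\mathbf{G}_k(n))\equiv 0\pmod{p_i^{r_i}}$ in that case.

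The core case is $r_i=1$ with $p_i$ odd. A product contributes $m^2\pmod{p_i}$ exactly when both $(p_i-1)\mid 2j$ and $(p_i-1)\mid (k-2j)$, equivalently when $(p_i-1)\mid k$. If $(p_i-1)\nmid k$ nothing survives and we get $0\pmod{p_i}$. Otherwise, set $K=k/(p_i-1)$ and reparameterize the surviving indices as $j=\ell(p_i-1)/2$, $\ell=1,\dots,K-1$, to rewrite
$$
\textrm{Re}(\mathbf{G}_k(n)) \equiv m^2 \sum_{\ell=1}^{K-1}(-1)^{\ell(p_i-1)/2}\binom{k}{\ell(p_i-1)}\pmod{p_i},
$$
which is precisely the sum of Proposition \ref{PROP:TEC} (with $k$ in place of the $n$ there). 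It evaluates to $-1\pmod{p_i}$ exactly when $p_i\equiv 3\pmod 4$ and $(p_i+1)\mid K$, i.e.\ $(p_i^2-1)\mid k$, and to $0$ otherwise; multiplying by $m^2=n^2/p_i^2$ yields the announced value.

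The remaining case $p_i=2$, $r_i=1$ lies outside Proposition \ref{PROP:TEC}, which requires $p$ odd, and is dispatched by a direct mod-$2$ computation: since $-1\equiv 1\pmod 2$ one has $(a+bi)^2\equiv a+b\pmod 2$, hence $(a+bi)^k\equiv a+b\pmod 2$ for $k$ even, and summing gives $\textrm{Re}(\mathbf{G}_k(n))\equiv\sum_{a,b=1}^n(a+b)\equiv n^2(n+1)\equiv 0\pmod 2$, matching the claim since $p_i=2\not\equiv 3\pmod 4$. The only real obstacle is recognizing that, after the von Staudt reduction and the reparameterization by $\ell$, the alternating binomial sum that emerges is exactly the one Proposition \ref{PROP:TEC} was tailored to evaluate; everything else is routine bookkeeping.
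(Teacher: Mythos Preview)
Your argument is essentially the paper's: expand via Lemma~\ref{LEM:SUM}(i), reduce each $S_{2j}(n)S_{k-2j}(n)$ modulo $p_i^{r_i}$ through the von Staudt congruence, and in the surviving case $r_i=1$, $(p_i-1)\mid k$ invoke Proposition~\ref{PROP:TEC} after the reparameterization $2j=\ell(p_i-1)$. The only differences are cosmetic---you handle $p_i=2$, $r_i=1$ by a direct mod-$2$ computation on $(a+bi)^k$ rather than via the parity of $\sum_{j=1}^{k/2-1}\binom{k}{2j}=2^{k-1}-2$, and you add a preamble on odd $k$ (which the paper simply excludes, tacitly taking $k$ even; note that your reduction there to Proposition~\ref{PROP:RE2} does not cover $n\equiv 2\pmod 4$, where the ``otherwise'' clause at $p_i=2$ would in fact conflict with Proposition~\ref{PROP:RE1}).
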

\begin{proof}
  Since $k$ is even and $S_0(n)=0$, by Lemma
  \ref{LEM:SUM} i), for every $1\leq i\leq s$:
  $$
  \textrm{Re}(\mathbf{G}_k(n))\equiv \sum_{j=1}^{\frac{k}{2}-1}
  (-1)^j\binom{k}{2j}S_{2j}(n)S_{k-2j}(n)\pmod{p_i^{r_i}}.
  $$
As usual in this section, we study different cases:
\begin{itemize}
\item If $p_i=2$, as $2j$ and $k-2j$ are even for every $j$ we have
that \cite{GMO} $S_{2j}(n)\equiv
\frac{n}{2^{r_i}}S_{2j}(2^{r_i})\equiv\frac{n}{2^{r_i}}2^{r_i-1}\equiv
n/2\pmod {2^{r_i}}$ and, in the same way $S_{k-2j}(n)\equiv
n/2\pmod{2^{r_i}}$. Hence, $\textrm{Re}(\mathbf{G}_k(n))\equiv
\frac{n^2}{4}\sum_{j=1}^{\frac{k}{2}-1}
(-1)^j\binom{k}{2j}\pmod{2^{r_i}}$. Now:
\begin{itemize}
  \item[i)] If $r_i>1$, clearly $n^2/4\equiv 0\pmod{2^{r_i}}$ because
  $2r_i-2\geq r_i$ and thus $\textrm{Re}(\mathbf{G}_k(n))\equiv
  0\pmod{2^{r_i}}$.
  \item[ii)] If $r_i=1$, we have that $\sum_{j=1}^{\frac{k}{2}-1}
  (-1)^j\binom{k}{2j}\equiv \sum_{j=1}^{\frac{k}{2}-1}
  \binom{k}{2j}\equiv 0\pmod{2}$ so, again,
  $\textrm{Re}(\mathbf{G}_k(n))\equiv 0\pmod{2^{r_i}}$.
\end{itemize}
\item If $p_i$ is an odd prime, then \cite{GMO}:
  $$
  S_m(p_i^{r_i})\equiv
  \begin{cases} -p_r^{r_i-1} \pmod{p_i^{r_i}}, &
    \textrm{if $p_i-1\mid m$};\\
    0 \pmod {p_i^{r_i}}, & \textrm{otherwise}.
  \end{cases}
  $$
Which gives:
\begin{itemize}
\item[i)] If $r_i>1$, then every term in the expression of
$\textrm{Re}(\mathbf{G}_k(n))$ is $0\pmod{p_i^{r_i}}$:
\begin{itemize}
  \item[a)] If either $p_i-1\nmid 2j$ or $p_i-1\nmid k-2j$, then either
  $S_2j(n)\equiv 0\pmod{p_i^{r_i}}$ or $S_{k-2j}(n)\equiv 0
  \pmod{p_i^{r_i}}$.
  \item[b)] If $p_i-1$ divides both $2j$ and $k-2j$, then
  $S_{2j}(n)S_{k-2j}(n)\equiv p^{2r_i-2}\equiv 0\pmod{p_i^{r_i}}$.
\end{itemize}
\item[ii)] If $r_i=1$ and $p_i-1\nmid k$, then, for every $1\leq j\leq
k/2-1$, either $p_i-1\nmid 2j$ or $p_i-1\nmid k-2j$. Thus, every
term in the expression of $\textrm{Re}(\mathbf{G}_k(n))$ is
$0\pmod{p_i}$.
\item[iii)] If $r_i=1$ and $p_i-1\mid k$, then, for every $1\leq j\leq
k/2-1$ either $p_i-1\mid 2j$ or $p_i-1\nmid 2j$. If $p_i-1\nmid 2j$, then
the corresponding term is $0\pmod{p_i}$. If $p_i-1\mid 2j$, so that
$p_i-1\mid k-2j$ and thus $S_{2j}(n)\equiv S_{k-2j}(n)\equiv
n/p_i\pmod{p_i}$. Consequently,
$$
\textrm{Re}(\mathbf{G}_k(n))\equiv\frac{n^2}{p_i^2}
\sum_{\substack{1\leq j\leq k/2-1\\ p_i-1\mid 2j}}
(-1)^j\binom{k}{2j}=\sum_{j=1}^{\frac{k}{p_i-1}-1}(-1)^{\frac{j(p_i-1)}{2}}\binom{k}{j(p_i-1)}.
$$
But this latter sum can be evaluated using Proposition \ref{PROP:TEC}
to complete the proof in this case.
\end{itemize}
\end{itemize}
\end{proof}

\begin{teor}
\label{TEOR:MAIN} Let $k,n\geq 1$ be integers. Define the set
$$
\mathcal{P}(k,n):=\{ \textrm{prime $p$} : p \mid \mid n, p^2-1\mid k,
p\equiv 3\pmod{4}\}.
$$
Then:
$$
\mathbf{G}_k(n)\equiv
\begin{cases}
  \frac{n}{2}(1+i) \pmod{n},
    & \textrm{if $k>1$ is odd and $n\equiv 2\pmod{4}$};\\
  \displaystyle{-\sum_{p\in\mathcal{P}(k,n)} \frac{n^2}{p^2}
    \pmod{n}},
    & \textrm{otherwise}.
\end{cases}
$$
\end{teor}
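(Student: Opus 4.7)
The plan is to decompose $\mathbf{G}_k(n)$ into its real and imaginary parts via Lemma \ref{LEM:SUM} and to assemble the statement from Propositions \ref{PROP:IM}, \ref{PROP:RE1}, \ref{PROP:RE2}, and \ref{PROP:RE3}, which between them cover every case of the pair $(k,n)$. First I would invoke Proposition \ref{PROP:IM} to dispose of $\textrm{Im}(\mathbf{G}_k(n))$: it equals $n/2$ modulo $n$ exactly when $k>1$ is odd and $n\equiv 2\pmod{4}$, and $0$ modulo $n$ otherwise. So the only remaining work concerns the real part.

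For the real part I would split into three cases. If $k>1$ is odd and $n\equiv 2\pmod{4}$, Proposition \ref{PROP:RE1} gives $\textrm{Re}(\mathbf{G}_k(n))\equiv n/2\pmod{n}$; combined with the imaginary part this yields $\frac{n}{2}(1+i)\pmod{n}$, matching the first case of the statement. If $k=1$, or if $k>1$ is odd and $n\not\equiv 2\pmod{4}$, Proposition \ref{PROP:RE2} gives $\textrm{Re}(\mathbf{G}_k(n))\equiv 0\pmod{n}$. In these subcases $\mathcal{P}(k,n)$ is empty: any $p\in\mathcal{P}(k,n)$ satisfies $p\equiv 3\pmod{4}$, so $p$ is odd and $p^2-1$ is even, which would force $k$ to be even, a contradiction. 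Hence $-\sum_{p\in\mathcal{P}(k,n)} n^2/p^2$ is the empty sum $0$, matching the second case.

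The remaining and substantive case is $k$ even. Here Proposition \ref{PROP:RE3} supplies, for each prime power $p_i^{r_i}$ appearing in $n$, a local congruence: $\textrm{Re}(\mathbf{G}_k(n))\equiv -n^2/p_i^2\pmod{p_i}$ when $p_i\in\mathcal{P}(k,n)$, and $\equiv 0\pmod{p_i^{r_i}}$ otherwise. The main obstacle, and the only real arithmetic in the proof, is to assemble these local congruences into a single congruence modulo $n$ via the Chinese Remainder Theorem, by verifying that the candidate $T:=-\sum_{p\in\mathcal{P}(k,n)} n^2/p^2$ matches each local piece. The key observation is that for $p\in\mathcal{P}(k,n)$ we have $p\mid\mid n$, so writing $n=pm$ with $\gcd(p,m)=1$ gives $n^2/p^2=m^2$. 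Consequently, for every prime $q\mid n$ with $q\ne p$, the term $n^2/p^2$ is divisible by $q^{2r_q}$ and a fortiori by $q^{r_q}$; while modulo $p$ itself, $n^2/p^2\equiv m^2$, which is precisely what Proposition \ref{PROP:RE3} demands. Summing over $p\in\mathcal{P}(k,n)$ yields $T\equiv -n^2/p_i^2\pmod{p_i}$ whenever $p_i\in\mathcal{P}(k,n)$ and $T\equiv 0\pmod{p_i^{r_i}}$ otherwise, so by CRT $\textrm{Re}(\mathbf{G}_k(n))\equiv T\pmod{n}$, which completes the proof.
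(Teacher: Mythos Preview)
Your proof is correct and follows essentially the same route as the paper: you invoke Propositions \ref{PROP:IM}, \ref{PROP:RE1}, \ref{PROP:RE2}, \ref{PROP:RE3} in the same roles and combine the local data via the Chinese Remainder Theorem. Your version is in fact a bit more explicit than the paper's, since you spell out why $\mathcal{P}(k,n)=\emptyset$ for odd $k$ and verify directly that the candidate $T=-\sum_{p\in\mathcal{P}(k,n)} n^2/p^2$ satisfies each local congruence, whereas the paper simply writes $n=(n/n')\cdot n'$ with $n'=\prod_{p\in\mathcal{P}(k,n)} p$ and appeals to CRT.
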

\begin{proof}
From Propositions \ref{PROP:IM} and \ref{PROP:RE1}, we
know that $\mathbf{G}_k(n)\equiv\frac{n}{2}(1+i)\pmod{n}$ if $k>1$ is odd
and $n\equiv 2\pmod{4}$.

In the remaining cases, $\textrm{Im}(\mathbf{G}_k(n))\equiv 0\pmod{n}$ by
Proposition \ref{PROP:IM}.

Define
$n'=\displaystyle{\prod_{p\in\mathcal{P}(k,n)}p}$. Clearly
$n=\frac{n}{n'}\cdot n'$ and $\gcd(n/n',n')=1$. Propositions
\ref{PROP:RE2} and \ref{PROP:RE3} imply that:
$$
\textrm{Re}(\mathbf{G}_k(n))\equiv 0\pmod{n/n'},
$$
$$
\textrm{Re}(\mathbf{G}_k(n))\equiv -n^2/p^2\pmod{p},\
\textrm{for every $p\in\mathcal{P}(k,n)$}.
$$
And the result follows applying the Chinese Remainder Theorem.
\end{proof}

\section{On the congruence $\mathbf{G}_k(n) \equiv 0 \pmod{n}$}

In this section we focus on the solutions to $\mathbf{G}_k(n) \equiv 0 \pmod{n}$; i.e. numbers $n$ such that $0=\displaystyle{\sum_{z\in\mathbb{Z}_n[i]} z^k }$
. In particular, we study the sets
$$
\mathcal{N}_k:=\{n \in \mathbb{N} : \mathbf{G}_k(n) \equiv 0 \pmod{n} \},
$$
$$
\mathcal{K}_n:=\{k \in \mathbb{N} : \mathbf{G}_k(n) \equiv 0 \pmod{n} \}.
$$
In other words, we are interested in the zeros of each row and
column in Table \ref{TAB}.

The following result is a simple consequence of Theorem
\ref{TEOR:MAIN}:

\begin{cor}
\label{COR:NK} Let $k,n\geq 1$ be integers. Then $\mathbf{G}_k(n) \not
\equiv 0\pmod{n}$ if and only if there exists a prime $p$ dividing $n$
such that:
\begin{itemize}
\item[i)] $p \equiv 3 \pmod{4}$.
\item[ii)] $p^2-1 \mid k$.
\item[iii)] $p^2 \nmid n$. %
\end{itemize}
\end{cor}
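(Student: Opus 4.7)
The plan is to read the corollary off from Theorem~\ref{TEOR:MAIN}, using the fact that the three conditions i), ii), iii) are precisely the defining conditions for a prime $p$ to lie in $\mathcal{P}(k,n)$. So the corollary amounts to the equivalence $\mathbf{G}_k(n)\not\equiv 0\pmod n \iff \mathcal{P}(k,n)\neq\emptyset$.

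I would split according to the two branches of the theorem. The branch $k>1$ odd, $n\equiv 2\pmod 4$ gives $\mathbf{G}_k(n)\equiv \frac{n}{2}(1+i)\not\equiv 0\pmod n$ directly. In the ``otherwise'' branch, $\mathbf{G}_k(n)\equiv -\sum_{p\in\mathcal{P}(k,n)} n^2/p^2 \pmod n$; if $\mathcal{P}(k,n)=\emptyset$ the sum is empty and $\mathbf{G}_k(n)\equiv 0\pmod n$, which gives one direction.

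The substantive step, which I expect to be the main (modest) obstacle, is to show that if $\mathcal{P}(k,n)\neq\emptyset$ then the sum $\sum_{p\in\mathcal{P}(k,n)} n^2/p^2$ is genuinely nonzero modulo $n$: \emph{a priori} the individual terms could cancel. My plan is to fix any $p_0\in\mathcal{P}(k,n)$ and reduce the sum modulo $p_0$. For each other $p\in\mathcal{P}(k,n)$, the primes $p_0$ and $p$ are distinct and both divide $n$, hence $p_0 p \mid n$ and therefore $p_0^2\mid n^2/p^2$; all such terms vanish mod $p_0$. The remaining term is $n^2/p_0^2 = (n/p_0)^2$, which is coprime to $p_0$ because the condition $p_0\|n$ built into $\mathcal{P}(k,n)$ forces $\gcd(n/p_0,p_0)=1$. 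Consequently
$$\sum_{p\in\mathcal{P}(k,n)} \frac{n^2}{p^2}\equiv\left(\frac{n}{p_0}\right)^2\not\equiv 0\pmod{p_0},$$
and \emph{a fortiori} this sum is nonzero modulo $n$, yielding $\mathbf{G}_k(n)\not\equiv 0\pmod n$ and completing the equivalence.
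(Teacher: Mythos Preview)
Your treatment of the ``otherwise'' branch is correct and is precisely the intended deduction from Theorem~\ref{TEOR:MAIN}: reducing the sum $\sum_{p\in\mathcal{P}(k,n)} n^2/p^2$ modulo a fixed $p_0\in\mathcal{P}(k,n)$ kills all terms but $(n/p_0)^2$, which is a unit mod $p_0$ since $p_0\|n$, so the sum is nonzero mod $p_0$ and hence mod $n$. That is the substantive content.

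There is, however, a genuine gap in the first branch. You observe that when $k>1$ is odd and $n\equiv 2\pmod 4$ the theorem gives $\mathbf{G}_k(n)\equiv\frac{n}{2}(1+i)\not\equiv 0\pmod n$, and then move on. But the corollary is an ``if and only if'', so in that branch you must also exhibit a prime satisfying i)--iii). No such prime exists: for any $p\equiv 3\pmod 4$ one has $8\mid p^2-1$, so $p^2-1\mid k$ is impossible for odd $k$, and $\mathcal{P}(k,n)=\emptyset$. Concretely, $\mathbf{G}_3(2)\equiv 1+i\not\equiv 0\pmod 2$, yet no prime divisor of $2$ meets condition i). So the equivalence you set out to prove actually \emph{fails} in this branch. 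This is a defect in the corollary as literally stated rather than in your method; the paper tacitly handles this case separately (see the first alternatives in Propositions~\ref{PROP:N} and~\ref{PROP:K}), so the corollary should be read as pertaining only to the second alternative of Theorem~\ref{TEOR:MAIN}. Your write-up should flag this rather than pass over it.
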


This corollary will allow us to explicitly describe the complements of
$\mathcal{N}_k$ and $\mathcal{K}_n$ and, furthermore, to obtain
information about their density.

\begin{prop}
\label{PROP:N} Let $p$ be a prime a define the set $\mathbb{F}(p) :=
\{p (p s+r): \textrm{$s \in \mathbb{N}$ and $0<r<p$} \}$. Then:
$$
\mathbb{N} \setminus\mathcal{N}_k=
\begin{cases}
  4\mathbb{N}+2, &
    \textrm{if $k>1$ is odd}; \\
  \displaystyle{\bigcup_{\substack{p^2-1\mid k\\p\equiv 3\pmod{4}}}
    \mathbb{F}(p)}, & \textrm{otherwise.}
\end{cases}
$$
\end{prop}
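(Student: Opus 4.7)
The plan is to unpack Corollary \ref{COR:NK} case by case, after identifying $\mathbb{F}(p)$ concretely. The key preliminary observation is that $\mathbb{F}(p)=\{n\in\mathbb{N} : p\mid\mid n\}$: writing a multiple of $p$ as $n=pm$ and $m=ps+r$ with $0\le r<p$, the condition $p\mid\mid n$ is equivalent to $p\nmid m$, i.e.\ $r\neq 0$, which is precisely the definition of $\mathbb{F}(p)$. Once this is established, the containment between $\mathbb{N}\setminus\mathcal{N}_k$ and the indicated union can be read off directly from the three conditions in Corollary \ref{COR:NK}.

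For the first case ($k>1$ odd), I would split according to the residue of $n$ modulo $4$. If $n\equiv 2\pmod 4$, Theorem \ref{TEOR:MAIN} yields $\mathbf{G}_k(n)\equiv\frac{n}{2}(1+i)\pmod n$, which is nonzero, so $n\notin\mathcal{N}_k$; this shows $4\mathbb{N}+2\subseteq\mathbb{N}\setminus\mathcal{N}_k$. For $n\not\equiv 2\pmod 4$, I would apply Corollary \ref{COR:NK} and observe that its condition cannot be met: any prime $p\equiv 3\pmod 4$ is odd, hence $p^2-1$ is even (in fact divisible by $8$), so $p^2-1\mid k$ is impossible when $k$ is odd. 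Consequently $n\in\mathcal{N}_k$ for every $n\not\equiv 2\pmod 4$, which proves the reverse inclusion.

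For the second case ($k$ even, or $k=1$), I would apply Corollary \ref{COR:NK} directly. Here $n\notin\mathcal{N}_k$ iff there exists a prime $p$ satisfying all three conditions: $p\equiv 3\pmod 4$, $p^2-1\mid k$, and $p\mid\mid n$. Using the identification $\mathbb{F}(p)=\{n : p\mid\mid n\}$, the last condition says exactly $n\in\mathbb{F}(p)$, while the first two single out which primes $p$ contribute to the union. For $k=1$, no prime satisfies $p^2-1\mid 1$, so the union is empty and $\mathcal{N}_1=\mathbb{N}$, consistent with row~1 of Table~\ref{TAB}.

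There is essentially no obstacle; the argument is a bookkeeping exercise. The only step requiring some care is confirming that the two cases of the statement together cover all $(k,n)$ without overlap in meaning: the first case handles $k>1$ odd \emph{and} isolates the specific $n\equiv 2\pmod 4$ behavior coming from the exceptional branch of Theorem \ref{TEOR:MAIN}, while the second case packages the Carlitz--von-Staudt-like obstruction via Corollary \ref{COR:NK}.
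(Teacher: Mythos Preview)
Your argument is correct and follows exactly the route the paper intends: Proposition~\ref{PROP:N} is stated in the paper without proof, as an immediate consequence of Theorem~\ref{TEOR:MAIN} and Corollary~\ref{COR:NK}, and your write-up simply unpacks that deduction, including the identification $\mathbb{F}(p)=\{n:p\mid\mid n\}$ and the observation that $p^2-1$ is even for $p\equiv 3\pmod 4$, so the odd-$k$ branch reduces to the exceptional case of Theorem~\ref{TEOR:MAIN}.
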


\begin{prop}\label{PROP:K} 
  Let $p$ be a prime and define the set $\mathbb{G}(p) :=
\{h(p^2-1): h \in \mathbb{N} \}$. Then:
$$\mathbb{N} \setminus\mathcal{K}_n=\begin{cases} 2\mathbb{N}+1, & \textrm{if $n \equiv 2\pmod{4}$}; \\ \displaystyle{\bigcup_{\substack{p \|  n \\ p \equiv 3 \pmod 4}} \mathbb{G}(p)}, & \textrm{otherwise}. \end{cases}$$
\end{prop}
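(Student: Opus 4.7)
The plan is to deduce Proposition \ref{PROP:K} directly from Theorem \ref{TEOR:MAIN} by treating $n$ as fixed and translating the conditions that make $\mathbf{G}_k(n)\pmod n$ nonzero into conditions on $k$. I would split on whether $n\equiv 2\pmod 4$.

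For $n$ falling in the ``otherwise'' branch of Theorem \ref{TEOR:MAIN} one has
\[
\mathbf{G}_k(n)\equiv -\sum_{p\in\mathcal{P}(k,n)}\left(\frac{n}{p}\right)^{2}\pmod n.
\]
The defining condition $p^{2}-1\mid k$ for membership in $\mathcal{P}(k,n)$ is exactly $k\in\mathbb{G}(p)$, so the set of $k$ for which $\mathcal{P}(k,n)\neq\emptyset$ coincides with $\bigcup_{\substack{p\|n\\p\equiv 3\pmod 4}}\mathbb{G}(p)$. The proposition therefore reduces to the non-cancellation claim that $\mathbf{G}_k(n)\not\equiv 0\pmod n$ precisely when $\mathcal{P}(k,n)$ is non-empty.

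I expect the non-trivial direction to be the main technical point: showing a non-empty $\mathcal{P}(k,n)$ forces the sum above to be nonzero. I would fix any $q\in\mathcal{P}(k,n)$ and reduce modulo $q$. For $p\in\mathcal{P}(k,n)$ with $p\neq q$, since $q\|n$ and $q\neq p$ we have $q\mid n/p$, hence $q^{2}\mid (n/p)^{2}$, so that term vanishes modulo $q$. The surviving term is $(n/q)^{2}$, which is coprime to $q$ because $q\nmid n/q$. Hence the whole sum is nonzero modulo $q$, and a fortiori modulo $n$.

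For $n\equiv 2\pmod 4$, the first clause of Theorem \ref{TEOR:MAIN} gives $\mathbf{G}_k(n)\equiv \tfrac{n}{2}(1+i)\not\equiv 0\pmod n$ for every odd $k>1$, so all of $2\mathbb{N}+1$ lies in $\mathbb{N}\setminus\mathcal{K}_n$. The remaining exponents ($k=1$ and $k$ even) fall into the ``otherwise'' clause of the theorem and are handled by the same non-cancellation analysis from the previous paragraph. Once this mod-$q$ reduction is in place, the rest is direct bookkeeping applied to Theorem \ref{TEOR:MAIN}.
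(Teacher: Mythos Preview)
Your overall strategy matches the paper's: both Proposition~\ref{PROP:N} and Proposition~\ref{PROP:K} are stated there without proof as direct consequences of Corollary~\ref{COR:NK} (equivalently, of Theorem~\ref{TEOR:MAIN}), and your non-cancellation argument---reducing the sum $-\sum_{p\in\mathcal{P}(k,n)}(n/p)^{2}$ modulo any fixed $q\in\mathcal{P}(k,n)$---is exactly what underlies Corollary~\ref{COR:NK}. That part is fine.

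There is, however, a genuine gap in the $n\equiv 2\pmod 4$ case, and it is not on your side but in the statement you are trying to prove. Your own analysis already says it: for even $k$ (and for $k=1$) the ``otherwise'' clause of Theorem~\ref{TEOR:MAIN} applies, and the non-cancellation argument then gives $k\notin\mathcal{K}_n$ \emph{iff} $\mathcal{P}(k,n)\neq\emptyset$. But $n\equiv 2\pmod 4$ does not prevent $\mathcal{P}(k,n)$ from being non-empty for even $k$. Take $n=6$: here $3\equiv 3\pmod 4$, $3\parallel 6$, and $3^{2}-1=8$, so $8\in\mathbb{G}(3)$ and your argument yields $8\in\mathbb{N}\setminus\mathcal{K}_{6}$. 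This is confirmed by the paper's own Table~1, where $\mathbf{G}_{8}(6)\equiv 2\pmod 6$. Thus $\mathbb{N}\setminus\mathcal{K}_{6}\neq 2\mathbb{N}+1$, and the first branch of the proposition is misstated: it should be
\[
(2\mathbb{N}+1)\ \cup\ \bigcup_{\substack{p\parallel n\\ p\equiv 3\pmod 4}}\mathbb{G}(p),
\]
i.e.\ the union of the two displayed cases. Your phrase ``the rest is direct bookkeeping'' glosses over precisely this point; carrying the bookkeeping out would have exposed the discrepancy rather than proved the claim as written.
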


In what follows, given a set $A\subseteq\mathbb{N}$, we denote by
$\delta(A)$ its asymptotic density.

\begin{teor}
  For very positive integer $k$, the asymptotic density of
$\mathcal{N}_k$ is:
$$
\delta(\mathcal{N}_k)=
\begin{cases}
  3/4, & \textrm{if $k>1$ is odd}; \\
  \displaystyle{\prod_{\substack{p^2-1 \mid k\\ p \equiv 3
        \pmod{4}}}}\frac{p^2-p+1}{p^2}, &
    \textrm{otherwise}.
\end{cases}
$$
\end{teor}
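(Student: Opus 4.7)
The plan is to read off the density directly from Proposition \ref{PROP:N}, which already provides a concrete description of $\mathbb{N}\setminus\mathcal{N}_k$ in each case.

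\textbf{Case $k>1$ odd.} Here $\mathbb{N}\setminus\mathcal{N}_k=4\mathbb{N}+2$, an arithmetic progression modulo $4$, so $\delta(\mathbb{N}\setminus\mathcal{N}_k)=1/4$ and therefore $\delta(\mathcal{N}_k)=3/4$.

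\textbf{Other case.} First I would observe that the index set
\[
\mathcal{Q}_k:=\{p\text{ prime}:p^2-1\mid k,\ p\equiv 3\!\!\pmod 4\}
\]
is \emph{finite}, because $p^2-1\mid k$ forces $p\le\sqrt{k+1}$. This is the crucial structural observation that turns the problem into a finite calculation. Writing $\mathbb{F}(p)=\{p(ps+r):s\ge 0,\,0<r<p\}$, the set $\mathbb{F}(p)$ is exactly the union of the $p-1$ residue classes $p,2p,\dots,(p-1)p$ modulo $p^2$, so it is periodic modulo $p^2$ and has density $(p-1)/p^2$. Its complement in $\mathbb{N}$ has density $(p^2-p+1)/p^2$.

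Next I would assemble the conditions for different primes. By Proposition \ref{PROP:N}, $n\in\mathcal{N}_k$ iff $n\notin\mathbb{F}(p)$ for every $p\in\mathcal{Q}_k$. Since the moduli $\{p^2\}_{p\in\mathcal{Q}_k}$ are pairwise coprime, the Chinese Remainder Theorem guarantees that the set
\[
\mathcal{N}_k=\bigcap_{p\in\mathcal{Q}_k}\bigl(\mathbb{N}\setminus\mathbb{F}(p)\bigr)
\]
is periodic modulo $M:=\prod_{p\in\mathcal{Q}_k}p^2$, and the number of admissible residues mod $M$ factors as the product, over $p\in\mathcal{Q}_k$, of the number of admissible residues mod $p^2$, namely $p^2-(p-1)=p^2-p+1$. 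Dividing by $M=\prod p^2$ yields
\[
\delta(\mathcal{N}_k)=\prod_{p\in\mathcal{Q}_k}\frac{p^2-p+1}{p^2},
\]
which is the stated formula.

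There is really no hard step: the finiteness of $\mathcal{Q}_k$ together with the pairwise coprimality of the moduli $p^2$ reduces everything to a routine CRT count. The only thing one has to be careful about is precisely verifying that $\mathbb{F}(p)$ has density $(p-1)/p^2$ (which follows from its description as the union of $p-1$ classes modulo $p^2$) and that the indicator functions of $\mathbb{N}\setminus\mathbb{F}(p)$ for distinct $p$ are independent in the density sense — which is the content of CRT for periodic sets with coprime periods.
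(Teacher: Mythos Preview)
Your proof is correct and follows essentially the same approach as the paper: both identify $\mathbb{F}(p)$ as a union of $p-1$ residue classes modulo $p^2$ and exploit the pairwise coprimality of the moduli $p^2$ via CRT. The only cosmetic difference is that the paper computes $\delta(\mathbb{N}\setminus\mathcal{N}_k)$ by inclusion--exclusion on the finite union $\bigcup_p \mathbb{F}(p)$, whereas you compute $\delta(\mathcal{N}_k)$ directly as a product by counting admissible residues in $\bigcap_p(\mathbb{N}\setminus\mathbb{F}(p))$; these are two sides of the same CRT coin.
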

\begin{proof}
  For any non-empty finite 
  family of primes $\mathcal{P}$, the
system of congruences
$$
\{x \equiv pr \pmod {p^2} : p\in\mathcal{P}\}
$$
has solutions. An easy inductive argument shows that
$$
\delta\left(\bigcap_{p\in
    \mathcal{P}}\mathbb{F}(p)\right)=\displaystyle{
  \frac{\displaystyle{\prod_{p \in \mathcal{P}}p-1}}{{\rm lcm} \{p^2:
    p \in \mathcal{P}\}}}=\prod_{p \in \mathcal{P} }\frac{p-1}{p^2}.
$$
Proposition \ref{PROP:N} and the inclusion-exclusion principle
lead to
$$
\delta(\mathbb{N}\setminus\mathcal{N}_k)=1-\prod_{\substack{p^2-1 \mid
    k\\p \equiv 3 \pmod{4}}} (1-\frac{p-1}{p^2})
$$
and we are done.
\end{proof}

This result has the following somewhat remarkable consequence:

\begin{cor}\label{COR:SOR}
  For every $\epsilon>0$, there exists $k \in
\mathbb{N}$ such that $
\delta(\mathcal{N}_k)<\epsilon$.
\end{cor}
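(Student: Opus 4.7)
The plan is to exhibit, for arbitrarily small $\epsilon>0$, an explicit even $k$ whose density $\delta(\mathcal{N}_k)$ is forced to be small by including many primes $p\equiv 3\pmod 4$ in the product formula of the preceding theorem. Concretely, for a large parameter $N$, I would set
$$
k_N \;:=\; \operatorname{lcm}\bigl\{\,p^2-1 \,:\, p \text{ prime},\ p\leq N,\ p\equiv 3\pmod 4\,\bigr\}.
$$
Since every $p^2-1$ is even, $k_N$ is even (and in particular not of the form ``$k>1$ odd''), so the preceding theorem gives
$$
\delta(\mathcal{N}_{k_N}) \;=\; \prod_{\substack{p^2-1\mid k_N\\ p\equiv 3\pmod 4}}\frac{p^2-p+1}{p^2} \;\leq\; \prod_{\substack{p\leq N\\ p\equiv 3\pmod 4}}\frac{p^2-p+1}{p^2},
$$
because the product on the right extends only over a subset of the primes appearing on the left (each extra factor lies in $(0,1)$).

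The main step is then to show that the right-hand product tends to $0$ as $N\to\infty$. Writing $\frac{p^2-p+1}{p^2}=1-\frac{p-1}{p^2}$ and taking logarithms, the estimate $\log(1-x)\leq -x$ for $x\in(0,1)$ yields
$$
\log\prod_{\substack{p\leq N\\ p\equiv 3\pmod 4}}\frac{p^2-p+1}{p^2} \;\leq\; -\sum_{\substack{p\leq N\\ p\equiv 3\pmod 4}}\frac{p-1}{p^2}.
$$
Since $\frac{p-1}{p^2}\geq \frac{1}{2p}$ for every odd prime $p$, it suffices to show that $\sum_{p\equiv 3\pmod 4}\tfrac{1}{p}$ diverges; this is the classical theorem of Dirichlet on the divergence of the sum of reciprocals of primes in an arithmetic progression with coprime modulus and residue. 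Therefore the displayed upper bound tends to $-\infty$, i.e.\ $\delta(\mathcal{N}_{k_N})\to 0$, and choosing $N$ large enough gives $\delta(\mathcal{N}_{k_N})<\epsilon$.

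The only subtle point—and the one I would check carefully—is that we are \emph{allowed} to drop the divisibility condition $p^2-1\mid k_N$ and pass to the weaker condition $p\leq N$, which is immediate from the construction of $k_N$; and that the primes $p\equiv 3\pmod 4$ are enough on their own to make the harmonic-type sum diverge, which is exactly the content of Dirichlet's result. No quantitative estimates are needed, so the argument should be very short.
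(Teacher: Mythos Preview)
Your proof is correct and follows the same idea as the paper: both rest on the fact that $\prod_{p\equiv 3\pmod 4}\frac{p^2-p+1}{p^2}=0$, which forces $\delta(\mathcal{N}_k)$ to be small once $k$ picks up enough such primes. The paper asserts this vanishing of the product in a single line, whereas you spell out both the construction of $k_N$ via an $\operatorname{lcm}$ and the divergence argument (reducing to Dirichlet's theorem on $\sum_{p\equiv 3\,(4)}1/p$); these are exactly the details one would supply to justify the paper's one-line proof.
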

\begin{proof} It is enough to observe that
  $$
  \prod_{\substack{p \equiv 3 \pmod 4}}\frac{p^2-p+1}{p^2}=0.
  $$
\end{proof}

\begin{rem}
  Corollary \ref{COR:SOR} means that, despite great
  amount of zeros in Table \ref{TAB}, there are rows such that the
  density of zeros on them is as close to 0 as desired.
\end{rem}

\begin{prop}\label{PROP:8}
  Let $n$ be a positive integer. If $3\mid n$ but $9\nmid
n$, then $8\mathbb{N}\subseteq \mathbb{N}\setminus \mathcal{K}_n$. If,
in addition, $n\not\equiv 2\pmod{4}$, then $8\mathbb{N}=
\mathbb{N}\setminus \mathcal{K}_n$.
\end{prop}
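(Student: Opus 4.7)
My plan is to work directly from Theorem \ref{TEOR:MAIN}, extracting first a small general fact: in the ``otherwise'' branch of that theorem, $\mathbf{G}_k(n) \not\equiv 0 \pmod{n}$ holds if and only if $\mathcal{P}(k,n) \neq \emptyset$. One direction is trivial (empty sum). For the converse, fix $p_0 \in \mathcal{P}(k,n)$ and reduce $-\sum_{p \in \mathcal{P}(k,n)} n^2/p^2$ modulo $p_0$: since each $p \in \mathcal{P}(k,n)$ satisfies $p \parallel n$, every summand with $p \neq p_0$ still carries the factor $p_0^2$ from $n^2$ (as $p_0$ is coprime to $p$) and therefore vanishes mod $p_0$; only the $p_0$-summand survives, equal to $-(n/p_0)^2$, which is a nonzero square mod $p_0$ because $\gcd(n/p_0, p_0)=1$. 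Thus a nonempty $\mathcal{P}(k,n)$ already forces nonvanishing mod $p_0$, hence mod $n$.

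With this equivalence in hand, the first inclusion $8\mathbb{N} \subseteq \mathbb{N}\setminus\mathcal{K}_n$ is immediate. Pick $k=8h$ with $h\geq 1$; as $k$ is even we fall into the ``otherwise'' branch of Theorem \ref{TEOR:MAIN}. The hypothesis $3 \mid n$, $9 \nmid n$ says $3 \parallel n$, and combined with $3 \equiv 3 \pmod{4}$ and $3^2-1 = 8 \mid k$ this puts $3 \in \mathcal{P}(k,n)$. By the observation above, $\mathbf{G}_k(n) \not\equiv 0 \pmod{n}$, i.e.\ $k \in \mathbb{N}\setminus\mathcal{K}_n$.

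For the reverse inclusion under the extra hypothesis $n \not\equiv 2 \pmod{4}$, let $k \in \mathbb{N}\setminus\mathcal{K}_n$. The first branch of Theorem \ref{TEOR:MAIN} requires $n \equiv 2 \pmod{4}$ and is therefore unavailable, so we must be in the ``otherwise'' branch. Nonvanishing forces $\mathcal{P}(k,n) \neq \emptyset$, and any $p \in \mathcal{P}(k,n)$ satisfies $p \equiv 3 \pmod{4}$, hence is odd, hence $p^2 \equiv 1 \pmod{8}$. Then $8 \mid p^2-1 \mid k$, so $k \in 8\mathbb{N}$. Combined with the first part, this yields the equality $8\mathbb{N} = \mathbb{N}\setminus\mathcal{K}_n$.

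The only nontrivial step is the mod-$p_0$ cancellation argument used to prove the intermediate equivalence; once that is in place the two inclusions are essentially divisibility bookkeeping plus the identity $3^2-1=8$.
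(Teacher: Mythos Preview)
Your proof is correct and follows essentially the same route as the paper's. The ``general fact'' you establish is precisely Corollary~\ref{COR:NK}, and from there your argument (using $3^2-1=8$ for the forward inclusion and $p^2\equiv 1\pmod 8$ for odd $p$ for the reverse) coincides with the paper's, which simply quotes Proposition~\ref{PROP:K} instead of rederiving the equivalence from Theorem~\ref{TEOR:MAIN}.
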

\begin{proof}
  If $8\mid k$, then $k\in\mathbb{G}(3)$. Hence, if $3\mid
n$ and $9\nmid n$, Proposition \ref{PROP:K} implies that
$k\in\mathbb{N}\setminus\mathcal{K}_n$.

I we furthermore assume that $n\not\equiv 2\pmod{4}$, then Proposition
\ref{PROP:K} implies that, if $k\in\mathbb{N}\setminus\mathcal{K}_n$,
then $p^2-1\mid k$ for some $p\mid n$ such that $p\equiv
3\pmod{4}$. But in this case, $p^2-1\equiv 0\pmod{8}$ and the proof
is complete.
\end{proof}

\section{On the congruence $\mathbf{G}_n(n) \equiv 0 \pmod{n} \}$}

We consider in this section the case $k=n$; i.e., we are concerned
with
those $n$ such that $n\mid \mathbf{G}_n(n)$. In other words: the
zeros in the diagonal of Table \ref{TAB}.

The following result is just a version of Corollary \ref{COR:NK} when
$k=n$.

\begin{cor}\label{COR:N}
  Let $n>1$ be an integer. Then, $\mathbf{G}_n(n) \not \equiv
  0\pmod{n}$ if and only if there exists
  a prime $p$ such that:
\begin{itemize}
\item[i)] $p\equiv 3 \pmod{4}$.
\item[ii)] $p^3-p \mid n$.
\item[iii)] $p^2 \nmid n$.
\end{itemize}
\end{cor}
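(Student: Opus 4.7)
The plan is to derive Corollary \ref{COR:N} as an immediate specialization of Corollary \ref{COR:NK} to the case $k=n$, together with a small consolidation of two divisibility conditions into one.

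First I would confirm that the specialization is unobstructed. Corollary \ref{COR:NK} is stated for all $k,n\geq 1$, but behind it lies the exceptional branch of Theorem \ref{TEOR:MAIN} where $k>1$ is odd and $n\equiv 2\pmod 4$. Taking $k=n$ automatically excludes that branch: if $n\equiv 2\pmod 4$ then $n$ is even, hence $k=n$ is even, so the exceptional hypothesis cannot hold. The "otherwise" clause of Theorem \ref{TEOR:MAIN} therefore always applies, and Corollary \ref{COR:NK} can be invoked without reservation. It yields that $\mathbf{G}_n(n)\not\equiv 0\pmod n$ if and only if there is a prime $p$ with $p\mid n$, $p\equiv 3\pmod 4$, $p^2-1\mid n$, and $p^2\nmid n$.

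The remaining step is to collapse the two divisibility conditions $p\mid n$ and $p^2-1\mid n$. Since $p$ is prime and $p^2-1=(p-1)(p+1)$, we have $\gcd(p,p^2-1)=1$; hence these two divisibilities are jointly equivalent to the single divisibility $p(p^2-1)\mid n$, i.e., $p^3-p\mid n$. Conditions (i) and (iii) of Corollary \ref{COR:N} are literal transcriptions of the corresponding conditions in Corollary \ref{COR:NK}, so the desired equivalence follows.

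There is no substantial obstacle in this argument; the only genuine check is the verification that the exceptional branch of the main theorem is inactive when $k=n$, so that Corollary \ref{COR:NK} applies verbatim and no additional case analysis is required.
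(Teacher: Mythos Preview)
Your proof is correct and follows the same approach as the paper, which simply remarks that Corollary~\ref{COR:N} is Corollary~\ref{COR:NK} specialized to $k=n$. Your additional care in verifying that the exceptional branch of Theorem~\ref{TEOR:MAIN} cannot occur when $k=n$, and in explaining why $p\mid n$ together with $p^2-1\mid n$ collapses to $p^3-p\mid n$ via $\gcd(p,p^2-1)=1$, makes the derivation fully explicit where the paper leaves these steps to the reader.
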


As a consequence we obtain a result similar to Proposition
\ref{PROP:8}:

\begin{prop}\label{PROP:24}
  Let $n$ be a positive integer. If $\mathbf{G}_n(n)\not
\equiv 0 \pmod{n}$, then $24 \mid n$.
\end{prop}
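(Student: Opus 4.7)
The plan is to derive this immediately from Corollary \ref{COR:N} by a simple divisibility argument on $p^3-p$ where $p$ is the witness prime supplied by that corollary.

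First I would invoke Corollary \ref{COR:N}: the hypothesis $\mathbf{G}_n(n)\not\equiv 0\pmod n$ guarantees the existence of a prime $p\equiv 3\pmod 4$ such that $p^3-p\mid n$. In particular, it suffices to prove that $24\mid p^3-p$ for every such prime $p$, since then $24\mid n$ follows by transitivity.

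Next I would factor $p^3-p=p(p-1)(p+1)$ and handle the prime factors $2$ and $3$ separately. For the factor $3$: among the three consecutive integers $p-1,p,p+1$ exactly one is divisible by $3$, so $3\mid p^3-p$. For the factor $8$: since $p\equiv 3\pmod 4$ the prime $p$ is odd, so $p-1$ and $p+1$ are consecutive even integers, one of which is divisible by $4$. Their product is therefore divisible by $8$, and hence $8\mid p^3-p$. Combining these two coprime divisibilities yields $24\mid p^3-p$, completing the proof.

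There is essentially no obstacle here; once Corollary \ref{COR:N} is in hand the result reduces to the elementary observation that $24$ always divides $p(p^2-1)$ for any odd prime $p$. A quick sanity check on the smallest case $p=3$ gives $p^3-p=24$, which already shows the bound $24$ in the statement is sharp.
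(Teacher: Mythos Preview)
Your proof is correct and follows essentially the same route as the paper's: invoke Corollary~\ref{COR:N} to obtain a prime $p\equiv 3\pmod 4$ with $p(p-1)(p+1)\mid n$, then observe that $8\mid (p-1)(p+1)$ and that one of the three consecutive integers $p-1,p,p+1$ is a multiple of $3$. If anything, your write-up spells out the $8$-divisibility step a bit more explicitly than the paper does.
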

\begin{proof}
  By Corollary \ref{COR:N}, if $\mathbf{G}_n(n) \not
\equiv 0 \pmod{n}$ then $n=hp(p+1)(p-1)$ for some prime $p \equiv 3
\pmod{4}$, so that $8 \mid (p+1)(p-1)$. Moreover, one of $-1$,
$p$ or $p+1$ is a multiple of 3 and we are done.
\end{proof}

Define the following set:
$$
\mathfrak{M}:=\{n \in \mathbb{N} :\mathbf{G}_n(n) \equiv 0 \pmod n\}
$$

The rest of the paper is devoted to computing the asymptotic density
of $\mathfrak{M}$. Note that Proposition \ref{PROP:24} implies that
this density (if it exists) is, at least,
$\frac{23}{24}=0.958\overline{3}$. In fact we show that it is
quite close to this value computing $\delta(\mathfrak{M})$ up to five
decimal places.

For a prime $p$, define the following set:
$$\mathfrak{U}_p:=\{n \in\mathbb{Z} : p^3-p \mid n, p^2 \nmid n\}.$$

\begin{prop}
  The set $\mathfrak{M}$ satisfies the following
conditions:
\begin{itemize}
\item[i)] $\displaystyle{\mathbb{N}\setminus\mathfrak{M}
=\bigcup_{\substack{p\ \textrm{prime}\\ p\equiv 3\pmod{4}}}
\mathfrak{U}_p}$.
\item[ii)] $\mathfrak{M}$ has an asymptotic density.
\end{itemize}
\end{prop}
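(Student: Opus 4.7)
Part (i) should be immediate from Corollary \ref{COR:N}: by the definition of $\mathfrak{M}$, $n\in\mathbb{N}\setminus\mathfrak{M}$ means $\mathbf{G}_n(n)\not\equiv 0\pmod n$, which by that corollary is equivalent to the existence of a prime $p\equiv 3\pmod 4$ with $p^3-p\mid n$ and $p^2\nmid n$, i.e., $n\in\mathfrak{U}_p$. This step is pure rewriting.

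For part (ii) the plan is to prove that $\mathbb{N}\setminus\mathfrak{M}$ has a density by a truncation argument. First I would compute $\delta(\mathfrak{U}_p)$ for a single prime $p\equiv 3\pmod 4$: the set $\mathfrak{U}_p$ is a union of residue classes modulo $p^2(p^2-1)$, and among the $p$ multiples of $p^3-p=p(p-1)(p+1)$ in one period exactly one (namely $0$) is also a multiple of $p^2$. Hence
$$
\delta(\mathfrak{U}_p)=\frac{p-1}{p^2(p^2-1)}=\frac{1}{p^2(p+1)},
$$
and in particular $\sum_{p\equiv 3\pmod 4}\delta(\mathfrak{U}_p)\leq\sum_p p^{-3}<\infty$.

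Next I would introduce the truncated families
$$
A_N:=\bigcup_{\substack{p\leq N\\ p\equiv 3\pmod{4}}}\mathfrak{U}_p,\qquad B_N:=\bigcup_{\substack{p>N\\ p\equiv 3\pmod{4}}}\mathfrak{U}_p,
$$
so that by part (i), $\mathbb{N}\setminus\mathfrak{M}=A_N\cup B_N$. Since $A_N$ is a finite union of periodic sets, it is periodic modulo $\mathrm{lcm}_{p\leq N}p^2(p^2-1)$ and therefore admits a density $d_N$, concretely computable via inclusion--exclusion. For the tail, the crude bound $|\mathfrak{U}_p\cap[1,x]|\leq x/(p^3-p)$ together with the fact that only primes $p\leq x^{1/3}$ can contribute yields
$$
\limsup_{x\to\infty}\frac{|B_N\cap[1,x]|}{x}\;\leq\;\sum_{\substack{p>N\\ p\equiv 3\pmod{4}}}\frac{1}{p^3-p}\;=:\varepsilon_N,
$$
and $\varepsilon_N\to 0$ as $N\to\infty$ by the convergence above. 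Sandwiching $|(\mathbb{N}\setminus\mathfrak{M})\cap[1,x]|/x$ between $|A_N\cap[1,x]|/x$ and $|A_N\cap[1,x]|/x+|B_N\cap[1,x]|/x$ then gives $d_N\leq\liminf\leq\limsup\leq d_N+\varepsilon_N$, which forces $(d_N)$ to be Cauchy and the density $\delta(\mathfrak{M})=1-\lim_N d_N$ to exist.

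The one point requiring some care is that the events $\{n\in\mathfrak{U}_p\}$ are not independent across different primes: the factors $p^2-1$ share small primes with each other (and with $q^2-1$ for other $q$), so the density of $A_N$ does not factor as a clean Euler product and must be handled through $\mathrm{lcm}$'s in the inclusion--exclusion. For the mere existence statement asserted here this is harmless, since all that is needed is that $d_N$ exist for each fixed $N$, which is automatic from periodicity; the extraction of the explicit numerical value of $\lim_N d_N$ announced in the abstract is what presumably will require additional work later in the paper.
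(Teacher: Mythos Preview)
Your proposal is correct and follows essentially the same route as the paper. Both derive (i) directly from Corollary~\ref{COR:N}, compute $\delta(\mathfrak{U}_p)=\dfrac{1}{p^2(p+1)}$ (the paper via $\mathfrak{U}_p=u_p\mathbb{Z}\setminus p u_p\mathbb{Z}$ with $u_p=p^3-p$, you via residue classes), and then use $\sum_p\delta(\mathfrak{U}_p)<\infty$ to conclude that the union has a density; your truncation/sandwich argument simply makes explicit the step the paper leaves as a one-line appeal to summability.
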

\begin{proof}
The first assertion is a straightforward consequence of Corollary
\ref{COR:N}.\\
In order to prove ii), let $u_p=p^3-p$ and observe that
$u_p=\min(\mathfrak{U}_p)$. Then, $\mathfrak{U}_p = u_p\mathbb{Z}
\setminus pu_p\mathbb{Z}$ and, consequently,
$$
\delta(\mathfrak{U}_p)=\frac{1}{u_p}-\frac{1}{pu_p}=
\frac{1}{p^2(1+p)}.
$$
Since $\sum_{p}\delta(\mathfrak{U}_p)<\infty$, it follows that
$\mathbb{N}\setminus\mathfrak{M}$ has an asymptotic density and
so has $\mathfrak{M}$, as claimed.
\end{proof}

In order to compute bounds for the asymptotic density of
$\mathfrak{M}$ (now we know it exists) we present a couple of
technical lemmata.

\begin{lem}
\label{LEM:EQ} Let $2<q<p$ be two prime numbers and $0<s<p$, $0<t<q$
two integers. The Diophantic equation
\begin{equation*} (p^3-p)(Yp+s) = (q^3-q)(Xq+t)
\end{equation*} has a solution if and only if $q^2\nmid p^2-1$.
\end{lem}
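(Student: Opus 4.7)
My plan is to track the $p$-adic and $q$-adic valuations of both sides of
\[ p(p^2-1)(Yp+s) = q(q^2-1)(Xq+t). \]
The $p$-adic side will give a clean reduction, and the $q$-adic side will yield both the obstruction and, when it is absent, an explicit construction via the Chinese Remainder Theorem.

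First I would observe that $\gcd(Yp+s,p)=\gcd(Xq+t,q)=1$ by the hypotheses $0<s<p$ and $0<t<q$. An elementary argument then shows $p\nmid q(q^2-1)$: $p\neq q$ since both are primes; $p\mid q-1$ is ruled out by $p>q$; and $p\mid q+1$ would force $p=q+1$, impossible since $p,q$ are both odd. Consequently the factor $p$ on the left must divide $Xq+t$, so setting $Xq+t=pU$ reduces the equation to
\[ (p^2-1)(Yp+s) = q(q^2-1)\,U. \]

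For the necessity of $q^2\nmid p^2-1$ I would compare $q$-adic valuations in the original equation: the right side has $v_q=1$ exactly (using $q\nmid q^2-1$ and $q\nmid Xq+t$), while the left side has $v_q\geq v_q(p^2-1)$. Hence $q^2\mid p^2-1$ forces $v_q(\mathrm{LHS})\geq 2>1$, ruling out any integer solution.

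For sufficiency under $q^2\nmid p^2-1$ I would set $g=\gcd(p^2-1,\ q(q^2-1))$, write $p^2-1=g\alpha$ and $q(q^2-1)=g\beta$ with $\gcd(\alpha,\beta)=1$, so the reduced equation $\alpha(Yp+s)=\beta U$ is solved by $Yp+s=\beta k$ and $U=\alpha k$ for some positive integer $k$. The two integrality requirements $p\mid\beta k-s$ and $q\mid p\alpha k-t$ are linear congruences in $k$: the first is solvable modulo $p$ because $v_p(\beta)=0$ (since $p\nmid q(q^2-1)$), and the second is solvable modulo $q$ because $v_q(\alpha)=v_q(p^2-1)-\min(v_q(p^2-1),1)=0$ under the hypothesis. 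The Chinese Remainder Theorem then yields $k$ modulo $pq$, and taking $k$ large enough makes $X,Y\geq 0$. The main obstacle, and the only place where the hypothesis really enters, is precisely this second congruence: it is exactly $q^2\nmid p^2-1$ that prevents $q\mid\alpha$, so obstruction and construction mirror each other and the dichotomy is clean.
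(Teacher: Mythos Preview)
Your proof is correct and the logic is clean. The paper takes a somewhat different route: after expanding and dividing through by $m=\gcd(p^2-1,q^2-1)$ (so $p^2-1=m\bar p$, $q^2-1=m\bar q$ with $\gcd(\bar p,\bar q)=1$), it treats the equation as a linear Diophantine equation in $K,K'$ and case-splits on $\gcd(\bar p,q^2)\in\{1,q,q^2\}$. In the first two cases the coefficients of $K$ and $K'$ end up coprime, so solutions exist; in the third, reducing modulo $q$ forces $q\mid t\bar q$, impossible since $0<t<q$. Your argument instead first absorbs the factor $p$ via $Xq+t=pU$, then factors out $g=\gcd\bigl(p^2-1,\,q(q^2-1)\bigr)$ and reduces everything to two linear congruences in a single parameter $k$, solved simultaneously by CRT. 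The valuation computation $v_q(\alpha)=v_q(p^2-1)-\min(v_q(p^2-1),1)$ makes the dichotomy completely transparent---it is zero precisely when $q^2\nmid p^2-1$---and avoids the three-way case split. The paper's version is more bare-hands and perhaps shows more explicitly \emph{why} the obstruction looks like a residue condition modulo $q$; yours is more uniform and would generalize more readily.
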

\begin{proof} Rewriting the equality as
  \begin{equation*} Kp^2(p^2-1) + sp(p^2-1) = K^{\prime}q^2(q^2-1) +
tq(q^2-1)
  \end{equation*} and taking the $\gcd$:
  \begin{equation*} p^2-1 = m\bar{p},\,\,\,q^2-1 = m\bar{q}
  \end{equation*} the original equation simplifies to
  \begin{equation*} Kp^2\bar{p} + sp\bar{p} =
K^{\prime}q^2\bar{q}+tq\bar{q}.
  \end{equation*} There are three cases to consider, depending on
$\gcd(\bar{p},q^2)$ (notice that $2<q$ implies $p\nmid \bar{q}$
because $p\nmid (q+1)(q-1)$).
  \begin{itemize}
  \item If $\gcd(\bar{p}, q^2)=1$ then the same happens with
$p^2\bar{p}$ and $q^2\bar{q}$, so that the equality is of the form
    \begin{equation}\label{eq:siempre-sol} Kp_1 = K^{\prime}p_2+b
    \end{equation} for $p_1$ and $p_2$ coprime, which has an infinite
number of solutions for any $b$.
  \item If $\gcd(\bar{p}, q^{2})=q$ then one can divide by $q$ both
sides of the equation to get
    \begin{equation*} Kp^2\tilde{p} + sp\tilde{p} = K^{\prime}q\bar{q}
+ t\bar{q},
    \end{equation*} with, again, $p^2\tilde{p}$ and $q\bar{q}$ coprime
and we have another equation like \eqref{eq:siempre-sol}.
  \item Finally, if $\gcd(\bar{p}, q^2)=q^2$ then, dividing both sides
by $q$ the equation becomes
    \begin{equation*} Kp^2q\tilde{p} + spq\tilde{p} =
K^{\prime}q\bar{q} + t\bar{q},
    \end{equation*} which has no solutions because $t<q$.
  \end{itemize}
\end{proof}

\begin{lem}\label{LEM:SU}
  For $p, s$ integers, define
$\mathfrak{F}(p,s):=\{p(p-1)(p+1) (Kp+s): K \in \mathbb{N}\}$. If
$\mathcal{P}$ is a finite family of primes and $\{s_q\}_{q \in\mathcal{P}}$
satisfies $0<s_q<q$, then:
$$
\delta\left(\bigcap_{q\in \mathcal{P}} \mathfrak{F}(q,s_q)\right)=
\begin{cases} 0,
    & \textrm{if there exist $p,q \in \mathcal{P}$ with
    $p^2 \mid  q^2-1$};  \\
  \displaystyle{\frac{1}{{\rm lcm} \{q^4-q^2: q \in \mathcal{P}\}}},
    & otherwise. \end{cases}
$$
\end{lem}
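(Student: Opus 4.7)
The plan is to rewrite each $\mathfrak{F}(q,s_q)$ explicitly as an arithmetic progression, reduce the intersection to a system of linear congruences, and then apply Lemma~\ref{LEM:EQ} to detect when that system is solvable.

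First, since a typical element of $\mathfrak{F}(q,s_q)$ has the form $q(q-1)(q+1)(Kq+s_q)=K\cdot q^2(q-1)(q+1)+s_q\cdot q(q-1)(q+1)$, the set coincides with the positive-integer part of the residue class
\[
a_q \pmod{m_q}, \qquad m_q:=q^4-q^2, \qquad a_q:=s_q\,q(q-1)(q+1),
\]
where $0<a_q<m_q$ thanks to $0<s_q<q$, and the smallest positive element of this class is already $a_q$. Consequently $\bigcap_{q\in\mathcal{P}}\mathfrak{F}(q,s_q)$ is the solution set of the system $\{n\equiv a_q\pmod{m_q}\}_{q\in\mathcal{P}}$, which is either empty or a single residue class modulo $M:={\rm lcm}\{m_q:q\in\mathcal{P}\}$, so its density is $0$ or $1/M$, respectively.

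Next I would invoke the generalised Chinese Remainder Theorem for non-coprime moduli: such a system is solvable if and only if it is \emph{pairwise} solvable, i.e.\ $\mathfrak{F}(p,s_p)\cap\mathfrak{F}(q,s_q)\neq\emptyset$ for every pair $p,q\in\mathcal{P}$. For distinct primes with $p<q$, a common element is exactly a solution of the Diophantine equation $(q^3-q)(Yq+s_q)=(p^3-p)(Xp+s_p)$, which by Lemma~\ref{LEM:EQ} (matching the smaller prime to the role of $q$, and the larger to the role of $p$, in that lemma) exists if and only if $p^2\nmid q^2-1$. Note that $p^2\mid q^2-1$ automatically forces $p<q$, since $p^2\leq q^2-1<q^2$. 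Hence the full intersection is non-empty iff no pair $p,q\in\mathcal{P}$ satisfies $p^2\mid q^2-1$, yielding exactly the claimed dichotomy.

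The main subtlety is matching variables correctly when invoking Lemma~\ref{LEM:EQ} and, above all, justifying the pairwise-implies-simultaneous step in generalised CRT; once those are in place the density statement is immediate, since a residue class modulo $M$ has density $1/M$ in $\mathbb{N}$.
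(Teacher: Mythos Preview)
Your argument is correct and follows essentially the same route as the paper: rewrite each $\mathfrak{F}(q,s_q)$ as the residue class $s_q(q^3-q)\pmod{q^4-q^2}$, use Lemma~\ref{LEM:EQ} to detect when a pair of these classes intersects, and conclude via the Chinese Remainder Theorem that the full intersection is either empty or a single class modulo ${\rm lcm}\{q^4-q^2\}$. If anything, you are more careful than the paper, which simply writes ``by the Chinese Remainder Theorem'' without noting that the moduli $q^2(q^2-1)$ are not pairwise coprime; your explicit appeal to the generalised CRT (pairwise compatibility $\Leftrightarrow$ global solvability) fills that gap.
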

\begin{proof}
  If there exist $p,q \in \mathcal{P}$ with $p^2 \mid
q^2-1$, Lemma \ref{LEM:EQ} implies that
$\mathfrak{F}(p,s_p)\bigcap\mathfrak{F}(q,s_q) = \emptyset$ and hence
$\bigcap_{q\in \mathcal{P}} \mathfrak{F}(q,s_q)= \emptyset$.

In the other case, by the Chinese Remainder Theorem, the set of solutions
of the system of simultaneous congruences given by:
$$
\{ x \equiv s_p (p^3-p) \pmod {p^4-p^2} : p \in \mathcal{P}\}
$$
determines an arithmetic progression of difference ${\rm lcm}
\{p^4-p^2: p \in \mathcal{P}\}$. Consequently its asymptotic density
is ${1/\rm lcm} \{q^4-q^2: q \in \mathcal{P}\}$ as claimed.
\end{proof}

We return to the sets $\mathfrak{U}_p$ previously defined.

\begin{prop}\label{PROP:DENU}
  Let $\mathcal{P}$ be a finite family of
primes. Then:
$$
\delta\left(\bigcap_{q\in \mathcal{P}}
  \mathfrak{U}_q\right)=
\begin{cases}
  0, &\textrm{if there are $p,q
    \in \mathcal{P}$ with $p^2 \mid  q^2-1$}; \\
  \displaystyle{\frac{\displaystyle{\prod_{q \in
          \mathcal{P}}q-1}}{{\rm lcm} \{q^4-q^2: q \in
      \mathcal{P}\}}}, & otherwise.
\end{cases}
$$
\end{prop}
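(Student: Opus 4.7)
The plan is to deduce the statement from Lemma \ref{LEM:SU} by partitioning each $\mathfrak{U}_q$ into classes of the form $\mathfrak{F}(q,s)$ and then exchanging intersection with disjoint union.

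First I would verify, for any prime $p$, the disjoint decomposition
$$
\mathfrak{U}_p \;=\; \bigsqcup_{s=1}^{p-1} \mathfrak{F}(p,s).
$$
Indeed, $n\in\mathfrak{U}_p$ means $n=(p^3-p)m$ with $p^2\nmid n$. Since $\gcd(p,p^2-1)=1$, the condition $p^2\nmid p(p^2-1)m$ is equivalent to $p\nmid m$. Writing $m=Kp+s$ with $K\ge 0$ and $1\le s\le p-1$ produces the claimed disjoint decomposition. Distributing intersection over the (finitely many) disjoint pieces gives
$$
\bigcap_{q\in\mathcal{P}}\mathfrak{U}_q \;=\; \bigsqcup_{(s_q)}\bigcap_{q\in\mathcal{P}}\mathfrak{F}(q,s_q),
$$
where the outer union ranges over the $\prod_{q\in\mathcal{P}}(q-1)$ tuples $(s_q)_{q\in\mathcal{P}}$ with $1\le s_q\le q-1$.

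Next I would apply Lemma \ref{LEM:SU} term by term. If there exist $p,q\in\mathcal{P}$ with $p^2\mid q^2-1$, then every intersection in the decomposition is empty (density $0$), and finite additivity of density on finite disjoint unions gives $\delta\bigl(\bigcap_{q\in\mathcal{P}}\mathfrak{U}_q\bigr)=0$. Otherwise each piece has density $1/\mathrm{lcm}\{q^4-q^2 : q\in\mathcal{P}\}$, a value independent of the chosen tuple; summing over the $\prod_{q\in\mathcal{P}}(q-1)$ tuples yields the asserted formula.

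The only mild subtlety is that asymptotic density is merely finitely additive, not $\sigma$-additive; but since $\mathcal{P}$ is finite and each $\mathfrak{U}_q$ is split into finitely many arithmetic-progression-like pieces, the resulting disjoint union is finite and additivity applies without issue. Thus no genuine obstacle remains once Lemma \ref{LEM:SU} is in hand, and the proof is essentially a bookkeeping argument on top of that lemma.
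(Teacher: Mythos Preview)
Your argument is correct and follows essentially the same route as the paper: decompose each $\mathfrak{U}_q$ as the disjoint union $\bigsqcup_{s=1}^{q-1}\mathfrak{F}(q,s)$, distribute the intersection over these finitely many pieces, and invoke Lemma~\ref{LEM:SU} on each resulting $\bigcap_{q}\mathfrak{F}(q,s_q)$. The paper phrases this as an induction on $|\mathcal{P}|$ to conclude that the intersection (when nonempty) is a union of $\prod_{q\in\mathcal{P}}(q-1)$ disjoint arithmetic progressions of common difference $\mathrm{lcm}\{q^4-q^2\}$, but this is exactly the content of your direct distributive decomposition, so the two presentations are the same in substance.
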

\begin{proof} By induction on the number of elements in $\mathcal{P}$
and using Lemma \ref{LEM:SU} it can be shown that
 the intersection $\bigcap_{q\in \mathcal{P}}
 \mathfrak{U}_q$ (when non-empty) is the union of
 ${\prod_{q \in \mathcal{P}}q-1}$ disjoint
arithmetic progressions of difference $ {\rm lcm} \{q^4-q^2: q \in
\mathcal{P}\}$.
\end{proof}

If $w(m)$ denotes the number of different prime factors of $m$,
 $\phi$ is the Euler totient function and defining
 $$
 \vartheta(m):=\begin{cases} 0, &\textrm{if there exist $p ,q \mid m$
     such that $p^2 \mid  q^2-1$}; \\
   \displaystyle{\frac{\phi(m)}{{\rm lcm} \{p^4-p^2: p  \mid m\}}}, &
   otherwise,\end{cases}
 $$
 then, the inclusion-exclusion principle together with the last
 Proposition let us state the following result:

 \begin{prop}
   Let $\mathcal{P}$ be a finite set of Gaussian primes and
$\Theta := \prod_{p \in P}$, then:

$$
\delta\left(\bigcup_{p\in \mathcal{P}}
  \mathfrak{U}_p\right)=-\sum_{1<d \mid \Theta} (-1)^{w(d)}
\vartheta(d)
$$
\end{prop}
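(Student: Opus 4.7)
The plan is to derive this as a straightforward application of the inclusion-exclusion principle to the finite union $\bigcup_{p\in\mathcal{P}}\mathfrak{U}_p$, using Proposition~\ref{PROP:DENU} to evaluate the density of each finite intersection appearing in the alternating sum. Since $\Theta=\prod_{p\in\mathcal{P}}p$ is squarefree, the non-empty subsets $S\subseteq\mathcal{P}$ are in bijection with the divisors $d>1$ of $\Theta$ via $d=\prod_{p\in S}p$, and under this bijection $|S|=w(d)$.

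First I would write
$$
\delta\!\left(\bigcup_{p\in\mathcal{P}}\mathfrak{U}_p\right)
=\sum_{\emptyset\neq S\subseteq\mathcal{P}}(-1)^{|S|+1}\,
\delta\!\left(\bigcap_{p\in S}\mathfrak{U}_p\right),
$$
which is legitimate because each $\mathfrak{U}_p$ is a finite union of arithmetic progressions and hence has a density, and finite unions/intersections of such sets inherit densities that satisfy inclusion-exclusion. Then, for each non-empty $S$, I would apply Proposition~\ref{PROP:DENU} to identify the corresponding term. If $S$ contains two primes $p,q$ with $p^2\mid q^2-1$, the intersection has density $0$; otherwise its density equals $\phi(d)/\operatorname{lcm}\{p^4-p^2:p\mid d\}$, since for squarefree $d$ one has $\phi(d)=\prod_{p\mid d}(p-1)$. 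In either case, the value is exactly $\vartheta(d)$ with $d=\prod_{p\in S}p$.

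Substituting, the sum becomes
$$
\sum_{1<d\mid\Theta}(-1)^{w(d)+1}\vartheta(d)
=-\sum_{1<d\mid\Theta}(-1)^{w(d)}\vartheta(d),
$$
which is the claimed identity. There is no real obstacle here: the content of the proposition is entirely absorbed into Proposition~\ref{PROP:DENU}, and the only thing to verify is the bookkeeping, namely that the ``bad'' subsets $S$ of the inclusion-exclusion expansion are exactly those $d\mid\Theta$ for which $\vartheta(d)=0$, so that extending the sum over all $d>1$ introduces no spurious terms. The mildest point worth stating explicitly is that inclusion-exclusion is valid for asymptotic density on finitely many sets that are finite unions of arithmetic progressions, which is clear from the definition of $\mathfrak{U}_p$.
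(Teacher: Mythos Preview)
Your proof is correct and follows exactly the approach the paper itself uses: the paper states the result as an immediate consequence of the inclusion--exclusion principle together with Proposition~\ref{PROP:DENU}, which is precisely your argument. The only additional detail you supply --- the explicit bijection between non-empty subsets $S\subseteq\mathcal{P}$ and divisors $d>1$ of $\Theta$, and the identification $\delta\bigl(\bigcap_{p\in S}\mathfrak{U}_p\bigr)=\vartheta(d)$ --- merely spells out what the paper leaves implicit.
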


These results allow us to approximate the
asymptotic density of $\mathfrak{M}$ which is given by the following sum:

$$
\delta(\mathfrak{M})=\sum_{m \in \Upsilon} {(-1)^{w(m)} \vartheta(m) }
$$
where $\Upsilon$ is the set of square-free integers whose prime
factors are all Gaussian.

\begin{teor}
  The asymptotic density of $\mathfrak{M}$ is
$0.971000\dots$
\end{teor}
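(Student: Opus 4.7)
The plan is to evaluate the explicit series
$$
\delta(\mathfrak{M}) = \sum_{m \in \Upsilon} (-1)^{w(m)} \vartheta(m)
$$
numerically by truncation, bounding the tail carefully enough to secure six-digit accuracy. Fix a threshold $N$ and let $\mathcal{P}_N := \{p \equiv 3 \pmod 4 : p \leq N\}$. By the preceding proposition,
$$
A_N := \delta\!\left(\bigcup_{p \in \mathcal{P}_N} \mathfrak{U}_p\right) = -\sum_{\substack{1 < d \\ d \mid \prod_{p \in \mathcal{P}_N} p}} (-1)^{w(d)} \vartheta(d)
$$
is a finite, explicitly computable rational number; the incompatibility clause in the definition of $\vartheta$ kills any term whose support contains primes $p,q$ with $p^2 \mid q^2-1$, so most summands vanish.

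Since $\mathbb{N}\setminus\mathfrak{M} \supseteq \bigcup_{p \in \mathcal{P}_N}\mathfrak{U}_p$, we get the trivial lower bound $A_N \leq 1 - \delta(\mathfrak{M})$. For a matching upper bound I would apply the union bound to the primes left out:
$$
1 - \delta(\mathfrak{M}) \leq A_N + T_N, \qquad T_N := \sum_{\substack{p > N \\ p \equiv 3 \pmod 4}} \frac{1}{p^2(p+1)} < \frac{1}{2N^2},
$$
the last estimate following from $\sum_{n>N} n^{-3} < \int_N^\infty t^{-3}\,dt$. Taking $N$ of order a few thousand thus makes $T_N$ smaller than the target precision $10^{-6}$, and $A_N$ pins down $1 - \delta(\mathfrak{M})$ to six digits.

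It remains to carry out the finite computation of $A_N$: enumerate squarefree products $d$ of primes in $\mathcal{P}_N$, discard those with an incompatible pair $p^2 \mid q^2-1$, and accumulate $(-1)^{w(d)-1}\phi(d)/\mathrm{lcm}\{p^4-p^2 : p \mid d\}$. Each additional prime factor in $d$ shrinks the summand by at least $1/(p^2(p+1))$, so only products supported on a handful of small primes contribute meaningfully; in particular the dominant term $\delta(\mathfrak{U}_3) = 1/36$ already explains most of the gap $1 - 0.971 \approx 0.029$. The main obstacle is purely computational bookkeeping, namely organising the inclusion-exclusion so as to retain exactly those terms needed to certify six decimals; this is a routine enumeration, and the outcome is the claimed $\delta(\mathfrak{M}) = 0.971000\ldots$.
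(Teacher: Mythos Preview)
Your strategy is exactly the paper's: restrict to primes $p\le N$, compute $\delta\bigl(\bigcup_{p\in\mathcal P_N}\mathfrak U_p\bigr)$ by inclusion--exclusion, and control the remainder by the union bound. The gap is in the feasibility of the computation you call ``routine''. Your tail estimate $T_N<1/(2N^2)$, obtained by dominating the sum over primes by $\sum_{n>N}n^{-3}$, forces $N\gtrsim 1000$ to reach precision $10^{-6}$; this puts roughly eighty primes into $\mathcal P_N$, and the full enumeration of squarefree $d$ then has on the order of $2^{80}$ terms. The paper instead bounds the tail by summing $1/(p^3+p^2)$ only over primes $p>263$ and comparing against the tabulated constant $\sum_{p\equiv 3\ (\mathrm{mod}\ 4)}p^{-3}$; this sharper estimate lets them stop at thirty primes, and even that $2^{30}$-term inclusion--exclusion required over twenty-four hours of machine time.

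Your suggested shortcut, that ``each additional prime factor in $d$ shrinks the summand by at least $1/(p^2(p+1))$'', is also incorrect as stated. When $p^2-1$ and $q^2-1$ share factors the $\mathrm{lcm}$ grows by less than $q^4-q^2$, so $\vartheta(dq)/\vartheta(d)$ can be much larger than $1/(q^2(q+1))$: for $d=3$, $q=7$ one computes $\vartheta(21)/\vartheta(3)=3/49$, not $1/392$. Hence this observation cannot be used---either as a heuristic speedup or as a rigorous Bonferroni-type truncation---without a more careful analysis of the common factors. To make your argument go through you need either the paper's sharper tail bound to bring $|\mathcal P_N|$ down to a tractable size, or a certified truncation of the inclusion--exclusion that you have not supplied.
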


\begin{proof}
  Let $\mathcal{P}$ be the set of the first thirty
Gaussian primes. Namely,
$$\mathcal{P}:=\{p\textrm{ prime}: p \equiv 3 \pmod 4  , p\leq 263\}.$$
Then:
$$
\delta\left(\bigcup_{p\in \mathcal{P}} \mathfrak{U}_p\right)\leq
\delta(\mathbb{N}\setminus\mathfrak{M})\leq
\delta\left(\bigcup_{p\in\mathcal{P}}
  \mathfrak{U}_p\right)+\sum_{\substack{p>263 \\ p
    \equiv 3 \pmod{4}}} \frac{1}{p^3+p^2}.
$$
Applying the inclusion-exclusion principle, and taking into
account Proposition \ref{PROP:DENU}, we have been able to compute,
using PARI/GP:
$$
\ell:=\delta\left(\bigcup_{p\in \mathcal{P}} \mathfrak{U}_p\right)=
\frac{52832172344...086951451}{1821843350513...659697280} = 0.0289992947691577872...
$$
where the numerator has $117$ digits and the denominator has $119$.
We know (see A085992 in the OEIS or \cite{COH}) that
$$ \sum_{\substack{p\ \textrm{prime} \\ p
    \equiv 3 \pmod{4}}} \frac{1}{p^3}
 =: \Theta= 0.0410075565664730319288865488519600259243
\dots
$$
Moreover, if $\mathfrak{p}:=1299689$ is the $99999$-th
prime, then one can compute

$$
\sum_{\substack{\mathfrak{p}<p\ \textrm{prime} \\ p
    \equiv 3 \pmod{4}}} \frac{1}{p^2+p^3}<\sum_{\substack{\mathfrak{p}< p\ \textrm{prime} \\ p
    \equiv 3 \pmod{4}}} \frac{1}{p^3}=\sum_{\substack{\mathfrak{p}\geq p\ \textrm{prime} \\ p
    \equiv 3 \pmod{4}}} \frac{-1}{p^3}+\Theta <2 \times 10^{-14},
$$
$$\sum_{\substack{263<p\leq\mathfrak{p}\\ p \equiv 3 \pmod{4}}} \frac{1}{p^3+p^2}<5.3539\times 10^{-7},$$
Consequently:
$$
0.0289992947<\ell<\delta(\mathbb{N}\setminus\mathfrak{M})<\ell+5.354\times
10^{-7}<0.0289998302,
$$
and hence:
$$0.971000169 < \delta(\mathfrak{M})<0.97100071.$$
\end{proof}

\begin{rem}
  The computation of the asymptotic density of $\mathfrak{M}$ up to
  $6$ decimal digits has required over $24$ hours. Albeit the
  implementation does not use either parallelism or caching, the fact
  that the computational complexity of the problem is essentially
  $\mathcal{O}(2^n)$ (due to the underlying inclusion-exclusion
  principle), trying to get to the $57$ Gaussian primes required for the next
  decimal digit has been seen by us as not not worth the effort, as we
  do not have access either to massively parallel hardware or large
  amounts of RAM.
\end{rem}

\section{Conclusions and future perspectives}
We have started with this work an interesting new research area on the
sum of powers on the ring
$\mathbb{Z}[i]/n\mathbb{Z}[i]$.
The formulas in Theorem \ref{TEOR:MAIN} allow a fast computation of
that sum from the Gaussian prime factors of $n$, in an analogue way as
von Staudt's formula for
$\mathbb{Z}_n$. There are also two areas of interest that this work
opens before us:
\subsection{Sums of powers in more general rings.}
A more general framework might be described as follows: given a finite
ring $\mathcal{A}$, 
find a formula for the value
of $\sum_{a \in \mathcal{A}} a^k$. Natural first steps might
$\mathcal{A}$ being the ring of square matrices of a given order with
coefficients in $\mathbb{Z}_n$ or the ring of Hamilton quaternions over
$\mathbb{Z}_n$, $\mathbb{H}(\mathbb{Z}_n)$. However, these cases might prove too complicated due
to their non-commutativity and the lack of results similar to those of
Section \ref{SEC:AUX}. At the same time, conjectures are not easy to
come up with, as computations soon become unfeasible for $n$ a little
large. As a matter of fact, we have found no pair $(k,n)$ such that
the sum of the $k-$th powers of the elements of
$\mathbb{H}(\mathbb{Z}_n)$ be nonzero.

On the other hand, the  numbers $n$ (up to $n=246$) for which the sum of of the $n$-th
powers of all $2 \times 2$ matrices over $\mathbb{Z}/n\mathbb{Z}$ is
non-zero are shown in  the OEIS sequence A236810. All of them are of
congruent with $ 6 \pmod {12} $, but this is not something we would
conjecture as a fact for all $ n \in \mathbb {N} $.

\subsection{The {\em Erd\H{o}s-Moser equation} in Gaussian stage}
We would like to finish this paper posing in the Gaussian context a topic
related to power sums of integers as the {\em
Erd\H{o}s-Moser equation}, which is the Diophantine equation
\begin{equation}
\label{eq} S_k(m-1)=m^k.
\end{equation}
In a 1950 letter to Moser, Erd\H{o}s conjectured that
solutions to this equation do not exist except for the trivial
one $1^1+2^1=3^1$. Three years later, Moser \cite{MOS} proved the
conjecture for odd $k$ or $m < 10^{10^6}$. Since then, much work on
this equation has been carried out, but the conjecture has not
been completely solved. For surveys of research on this and related
problems, see \cite{BUT,MOR} and \cite[Section D7]{Guy}.

For power sums of Gaussian integers, a reasonable analogue
Diophantine equation could be
$$
\mathbf{G}_k(m-1)=(m+m i)^k
$$
for which, after performing computations for
$k,m<100$, we state the following

\begin{con}
  The equation above has only the solution $(k,m)=(2,3)$:
$$ (1 + i)^2 + (1 + 2i)^2 + (2 + i)^2 + (2 + 2i)^2 = 18 i = (3 + 3i)^2$$
\end{con}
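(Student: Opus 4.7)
The conjecture is the Gaussian counterpart of the classical Erdős–Moser equation, which is itself still open, so I would aim first at strong necessary conditions that any putative solution $(k,m)\neq(2,3)$ must satisfy, and only then attempt to exclude them. The plan combines three ingredients: Theorem \ref{TEOR:MAIN} applied modulo $m-1$, a direct analysis modulo $m$, and an archimedean size comparison.

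First, reduce modulo $n:=m-1$. Since $m\equiv 1\pmod n$, the equation collapses to $\mathbf{G}_k(n)\equiv(1+i)^k\pmod n$, and Theorem \ref{TEOR:MAIN} gives the left-hand side explicitly. In the ``otherwise'' branch of that theorem the left-hand side is a real integer modulo $n$, which forces $\textrm{Im}((1+i)^k)\equiv 0\pmod n$. As $|(1+i)^k|=2^{k/2}$, its imaginary part is an explicit integer of absolute value at most $2^{k/2}$, vanishing iff $4\mid k$. Thus either $4\mid k$, or $n$ divides a fixed nonzero integer of size $\le 2^{k/2}$, yielding $m\le 2^{k/2}+1$. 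The special branch ($k$ odd, $n\equiv 2\pmod 4$) is handled analogously by comparing $(1+i)^k$ with $\tfrac{n}{2}(1+i)$.

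Second, reduce modulo $m$. When $m$ is prime, $\sum_{a=1}^{m-1}a^j\equiv -1\pmod m$ if $(m-1)\mid j$ and $0$ otherwise, so binomial expansion of $(a+bi)^k$ yields
$$
\mathbf{G}_k(m-1)\equiv\sum_{\substack{0\le j\le k\\ (m-1)\mid j\\ (m-1)\mid k-j}}\binom{k}{j}\,i^{j}\pmod m,
$$
and this finite sum must vanish mod $m$ since $(m+mi)^k$ does. For composite $m$ one obtains an analogous obstruction at each prime power divisor, glued by CRT. Combined with the first step, only a very sparse set of candidates $(k,m)$ survives.

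Third, archimedean comparison. A Faulhaber-type double integration gives
$$
\mathbf{G}_k(n)=\frac{n^{k+2}}{i(k+1)(k+2)}\bigl((1+i)^{k+2}-1-i^{k+2}\bigr)+O_k(n^{k+1}),
$$
so, equating leading magnitudes with $|(m+mi)^k|=2^{k/2}m^k$, one forces $m^2\asymp (k+1)(k+2)/2$, i.e.\ $m\sim k/\sqrt 2$. The hard part will be the regime $4\mid k$, where the imaginary-part obstruction of the first step is vacuous and one is reduced to comparing real parts modulo $m-1$, a Gaussian analogue of precisely the Carlitz–von Staudt mechanism underlying Erdős–Moser. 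A complete proof along these lines is almost certainly out of reach with present tools; a realistic short-term goal is to settle the conjecture in tractable subcases, for instance $k$ odd (where the first step already gives the exponential bound $m\le 2^{k/2}+1$), $m$ prime with $m\equiv 3\pmod 4$ (where the mod $m$ analysis is cleanest thanks to $\mathbb{Z}_m[i]=\mathbb{F}_{m^2}$), or $n\equiv 2\pmod 4$ (where the special branch of Theorem \ref{TEOR:MAIN} pins down both components of $(1+i)^k$ tightly).
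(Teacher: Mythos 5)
This statement is a \emph{conjecture}, not a theorem: the paper offers no proof, only the remark that it was verified computationally for $k,m<100$ together with the explicit check of the solution $(k,m)=(2,3)$. So there is no argument in the paper to compare yours against, and your proposal --- which you yourself concede is ``almost certainly out of reach with present tools'' --- does not prove the statement either. As a submission for this item the honest verdict is that the gap is total on both sides; the only thing one could legitimately ``prove'' here is the displayed identity, which is a one-line computation: $(1+i)^2+(1+2i)^2+(2+i)^2+(2+2i)^2=2i+(-3+4i)+(3+4i)+8i=18i=(3+3i)^2$.

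That said, the reductions you sketch are sound and worth recording. Reducing modulo $n=m-1$ does give $\mathbf{G}_k(n)\equiv(1+i)^k\pmod{n}$, and in the ``otherwise'' branch of Theorem \ref{TEOR:MAIN} the left side is a rational integer mod $n$, so $n\mid\mathrm{Im}\bigl((1+i)^k\bigr)$. You undersell this: since $\mathrm{Im}\bigl((1+i)^k\bigr)=\pm 2^{\lfloor k/2\rfloor}$ whenever $4\nmid k$, the conclusion is not merely $m\le 2^{k/2}+1$ but that $m-1$ must be a \emph{power of two}, which together with your (correct) archimedean estimate $m\sim k/\sqrt{2}$ pins candidates with $4\nmid k$ to an extremely thin set. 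Your mod-$m$ identity for prime $m$ and the Faulhaber-type asymptotic both check out. But the case $4\mid k$ --- where $(1+i)^k$ is real and the imaginary-part obstruction is vacuous --- is left entirely open, and that is exactly the regime where the classical Erd\H{o}s--Moser difficulties reappear. In short: a reasonable research program, with one step sharper than you claim, but not a proof, and the paper does not contain one either.
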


\end{document}